\newtheorem{proposition}{Proposition}[section]
\newtheorem{construction}{Construction}[section]
\newtheorem{theorem}{Theorem}[section]
\newtheorem{lemma}{Lemma}[section]
\newtheorem{corollary}{Corollary}[section]
\begin{document}
\title{\vspace{-1.5cm} % Adjust vertical space above the title
   Polyominoes with maximal number of deep holes  % Bold title
}

% Custom author and affiliation setup
\author{Đorđe Baralić
    \and Shiven Uppal}

\address{ \scriptsize{Mathematical Institute SASA, Belgrade, Serbia }}
\email{djbaralic@mi.sanu.ac.rs}

\address{\scriptsize{Sanskriti School, New Delhi, India}}
\email{shivenuppal21@gmail.com}
    
\date{} % No datee

\begin{abstract} 
In this paper, we study the extremal behaviour of deep holes in polyominoes. We determine the maximum number, $h_n$ of deep holes that an $n$-omino can enclose, ensuring that the boundary of each hole is disjoint from the boundaries of any other hole and from the outer boundary of the $n$-tile. Using the versatile application of Pick's theorem, we establish the lower and the upper bound for $h_n$, and show  that $h_n=\frac{n}{3}+o(n)$ asymptotically. To further develop these results, we compute $h_n$ as a function of $n$ for an infinite subset of positive integers. 
\end{abstract}

\maketitle 
\section{introduction}

The term \textit{'polyominoes'} was first introduced in \cite{Golomb} by Solomon W. Golomb in 1953 to describe a plane geometric figure that is formed by connecting a finite number of unit squares together along their edges. A polyomino is a subset of unit squares that has a connected interior. These unit squares are also referred to as \textit{cells}. Polyominoes were further popularized by Martin Gardner in `Mathematical Games' columns for \textit{Scientific American} magazine and by David Klarner in his research papers \cite{Klarner} and \cite{Klarner1}. They are a part of recreational mathematics and form the basis of many geometric combinatorial games such as tiling problems. A classic example is the tetromino, used in the game \textit{Tetris}, which consists of four cells connected in different combinations, \ref{tetro}. 

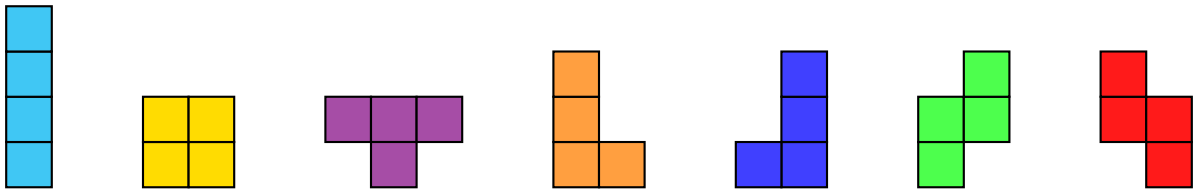
\begin{figure}[htbp]
\begin{center}
\begin{tikzpicture}[scale=0.6, every node/.style={font=\small}]

% I tetromino (cyan)
\foreach \y in {0,1,2,3} {
    \filldraw[thick, fill=cyan!60] (0,\y) rectangle ++(1,1);
}

% O tetromino (yellow)
\foreach \x in {3,4}
  \foreach \y in {0,1}
    \filldraw[thick, fill=yellow!80!orange] (\x,\y) rectangle ++(1,1);

% T tetromino (purple)
\foreach \x in {7,8,9} {
    \filldraw[thick, fill=violet!70] (\x,1) rectangle ++(1,1);
}
\filldraw[thick, fill=violet!70] (8,0) rectangle ++(1,1);

% L tetromino (orange)
\foreach \y in {0,1,2} {
    \filldraw[thick, fill=orange!75] (12,\y) rectangle ++(1,1);
}
\filldraw[thick, fill=orange!75] (13,0) rectangle ++(1,1);

% J tetromino (blue)
\foreach \y in {0,1,2} {
    \filldraw[thick, fill=blue!75] (17,\y) rectangle ++(1,1);
}
\filldraw[thick, fill=blue!75] (16,0) rectangle ++(1,1);

% S tetromino (green) with one cell gap after L
\filldraw[thick, fill=green!70] (20,0) rectangle ++(1,1);
\filldraw[thick, fill=green!70] (20,1) rectangle ++(1,1);
\filldraw[thick, fill=green!70] (21,1) rectangle ++(1,1);
\filldraw[thick, fill=green!70] (21,2) rectangle ++(1,1);

% Z tetromino (red) with one cell gap after L
\filldraw[thick, fill=red!90] (24,1) rectangle ++(1,1);
\filldraw[thick, fill=red!90] (24,2) rectangle ++(1,1);
\filldraw[thick, fill=red!90] (25,0) rectangle ++(1,1);
\filldraw[thick, fill=red!90] (25,1) rectangle ++(1,1);

\end{tikzpicture}
\caption{Tetris shapes}\label{tetro}
\end{center}
\end{figure}
\par

Polyominoes are named based on the area they enclose; a polyomino consisting of $n$ unit squares is called an $n$-omino. A polyomino is said to be \textit{without holes} if its interior is simply connected and its boundary forms a simple, unbroken loop. Problems related to simple polyominoes (polyominoes without holes) have been extensively studied in the literature. Harary and Harborth proved in \cite{harary} that the minimal possible perimeter of $n$-omino is $2 \lceil 2\sqrt{n}\rceil$. In contrast, in this paper, we aim to maximize the number of holes in an $n$-omino using Pick’s theorem. 
\par
Early groundwork by Eden Murray \cite{Murray} laid the foundation for understanding polyomino enumeration and growth, providing essential insight into the asymptotic combinatorial properties of polyominoes. He was the first to deduce that the number of distinct simply connected $n$-ominoes, $a_n$, satisfies the following inequality 
\begin{equation*}
(3.14)^n \leq a_n \leq \left (\frac{27}{4} \right)^n
\end{equation*}
\par
 In 1960s, Klarner's results \cite{Klarner} and \cite{Klarner1} further improved the lower bound for $a_n$ to
\begin{equation*}
    (3.73)^n \leq a_n
\end{equation*}
\par
He also proved that there exists $\lambda$ such that
\begin{equation*}
    \lim_{n \to \infty} \left(a(n)\right)^{1/n} = \lambda 
\end{equation*}
The limit for the growth constant of polyominoes $\lambda$ is called \textit{Klarner's constant}. The lower bound and upper bound limits were later improved to $3.980137 \leq \lambda$ (see \cite{Barequet}) and $\lambda \leq 4.5685$ (see \cite{Barequet1}), respectively.

\par 
The number of fixed $n$-ominoes $a_n$ is given by 
\begin{equation*}
    A(n) \sim C \lambda^n n^{\theta} 
\end{equation*}
for some constants $C > 0$ and $\theta \approx -1$. The conjecture is that $\lambda= 4.062569$.
\par 
Extending this concept, in her thesis \cite{Roldan}, \'{E}rika Rold\'{a}n-Roa studied the maximum number of holes that an $n$-omino can enclose, where a hole in a polyomino is defined as a bounded, connected component of the complement of the polyomino in a plane. In her notation, for a polyomino $A$, $h(A)$ denotes the number of holes in $A$, and $\mathcal{A}_n$ denotes the set of all $n$-ominoes. For $n \geq 1$, the function is 
\begin{equation*}
    f(n) = \max_{A \in \mathcal{A}_n} h(A)
\end{equation*}
For $h \geq 1$, the function $g(h)$ is defined as the minimum number of unit squares required to construct a polyomino with exactly $h$ holes.
\begin{equation*}
    g(h) := \min \{\, |A| \mid h(A) = h \,\}.
\end{equation*}
where $|A|$ denotes the number of squares.
\par
While polyominoes with fewer than seven tiles do not have a hole, polyominoes with holes grow exponentially faster than simply connected polyominoes. Kahle and Rold\'{a}n-Roa proved a tight bound for the asymptotic behaviour of the maximum number of holes in an $n$-omino as $n$ tends to infinity. Their result follows from the following inequality proved in \cite{Kahle}.
\begin{equation*}
\frac{1}{2}n - \sqrt{\frac{5n}{2}} + o(\sqrt{n}) \leq f(n) \leq \frac{1}{2}n - \sqrt{\frac{3n}{2}} + o(\sqrt{n}).
\end{equation*}
Therefore, asymptotically, the maximum number of holes satisfies \( f(n) \approx \frac{n}{2} \). In the same paper, they have shown that \( f(n_k) = h_k \) for every \( k \geq 1 \) where
\begin{equation*}
n_k = \frac{2^{2k+1} + 3 \times 2^{k+1} + 4}{3} \quad \text{and} \quad h_k = \frac{2^{2k} - 1}{3}.
\end{equation*}
Malen and Rold\'{a}n-Roa determined the values of $f(n)$ and $g(n)$ for all positive integers $n$ in \cite{Malen}.
\par
Motivated by previous research, we study the problem of the maximal number of deep holes that an $n$-omino can enclose. By a deep hole, we assume a hole whose boundary does not have any common point with the outer boundary of the $n$-omino or with the boundary of another hole.  

\section{Holes and deep holes }

In the literature, the term \textit{hole} in a polyomino refers to a bounded, connected component of its complement in a plane. In contrast to the considerations of other mathematicians, we define\textit{deep hole} in a given polyomino as a region entirely surrounded by the cells of the polyomino. That means that the vertices of a deep hole cannot belong to  the outer boundary of the shape or the boundary of other deep holes.  

A polyomino without a hole is called \textit{free}. From a topological point of view, a free polyomino is simply connected, while a polyomino with $k$ holes has the homotopy type of a wedge of $k$ circles. Therefore, free polyominoes are suitable for tilings, and they have been studied more extensively in mathematics than those with holes. In Figure \ref{fig1}, from left to right, we show examples of two free polyominoes and a heptomino with one hole that is not deep because the vertices of a hole lie on the outer boundary of a polyomino.

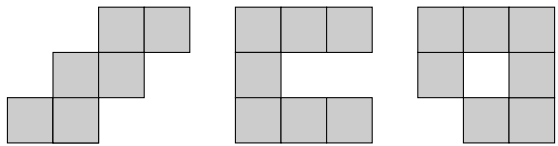
\begin{figure}[h]
\begin{center}
\begin{tikzpicture}[xscale=0.6, yscale=0.6]
    \begin{scope}
        \foreach \x/\y in {
            2/3, 3/3,
            1/2, 2/2,
            0/1, 1/1,
        } {
            \fill[gray!40!] (\x,\y) rectangle ++(1,1);
            \draw[black] (\x,\y) rectangle ++(1,1);
        }
    \end{scope}

    \begin{scope}[xshift=5cm]
        \foreach \x/\y in {
            0/3, 1/3,
            0/2, 2/3,
            0/1, 1/1, 2/1
        } {
            \fill[gray!40!] (\x,\y) rectangle ++(1,1);
            \draw[black] (\x,\y) rectangle ++(1,1);
        }
    \end{scope}

    \begin{scope}[xshift=9cm]
        \foreach \x/\y in { 1/1, 2/1, 0/2, 2/2, 0/3, 1/3, 2/3} {
            \fill[gray!40!] (\x,\y) rectangle ++(1,1);
            \draw[black] (\x,\y) rectangle ++(1,1);
        }
    \end{scope}
\end{tikzpicture}

\caption{Free hexomino and heptomino, and a heptomino with one hole}\label{fig1}
\end{center}
\end{figure}

However, the boundary of a polyomino with deep holes is a disjoint union of non-self-intersecting loops. The boundary of a free polyomino is a single loop without self-intersection, so polyominoes with deep holes have topological properties similar to those of free polyominoes. A deep hole is a hole in the sense studied by Kahle, Malen, Rold\' {a}n-Roa and others, but not every hole is a deep hole. Holes in the right polyomino in Figure \ref{fig2} are deep, whereas those in the left are not, as the vertices of a hole cannot belong to other holes.

\par 

\begin{figure}[h]
\begin{center}
\begin{tikzpicture}[xscale=0.6, yscale=0.6]
    \begin{scope}
        \foreach \x/\y in {
            0/3, 1/3, 2/3,
            0/2, 2/2,
            0/1, 1/1, 3/1,
            1/0, 2/0, 3/0
        } {
            \fill[gray!40!] (\x,\y) rectangle ++(1,1);
            \draw[black] (\x,\y) rectangle ++(1,1);
        }
    \end{scope}

    \begin{scope}[xshift=5cm]
        \foreach \x/\y in {
            0/3, 1/3, 2/3,
            0/2, 2/2, 3/2, 4/2,
            0/1, 1/1, 2/1, 4/1,
            2/0, 3/0, 4/0
        } {
            \fill[gray!40!] (\x,\y) rectangle ++(1,1);
            \draw[black] (\x,\y) rectangle ++(1,1);
        }
    \end{scope}
\end{tikzpicture}
\caption{Polyominoes with no deep holes and two deep holes}\label{fig2}
\end{center}
\end{figure}
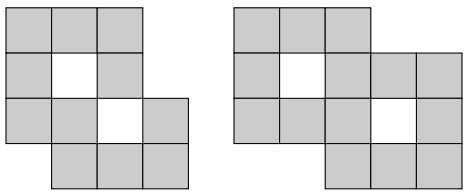
\par
Motivated by \cite{Kahle}, \cite{Roldan} and \cite{Malen}, we investigate the maximal number of deep holes, $h_n$, that can be enclosed by an $n$-omino. Polyominoes containing only deep holes form a particularly interesting subclass of polyominoes because all the boundary components are topologically circles.

It is obviously clear that we need at least eight unit squares to enclose a deep hole. Therefore, $h_n=0$ for all $1\leq n\leq 7$.

Let us make some preliminary estimations of the number of deep holes inside a polyomino. Observe that we can exclude $a b$ cells from the $(2a+1)\times (2b+1)$ polyomino. It means that for $n=3ab + 2a + 2b + 1$, we have $h_n \geq ab$, see \ref{fig3a}.

% (2a+1)x(2b+1)
\begin{figure}[h]
\begin{center}
\begin{tikzpicture}[scale=0.4]
\foreach \row in {0,1,2} {
    \foreach \col in {0,1,2} {
        \begin{scope}[xshift=7*\col cm, yshift=-5*\row cm]
            \foreach \x/\y in {0/0,1/0,2/0,3/0,4/0, 
                               0/1,    2/1,    4/1, 
                               0/2,1/2,2/2,3/2,4/2} {
                \fill[gray!40] (\x,\y) rectangle ++(1,1);
            }
            \fill[white] (1,1) rectangle ++(1,1);
            \draw (0,0) grid (5,3);
            \draw (1,1) rectangle ++(1,1);
        \end{scope}
    }
}
\foreach \row in {0,1,2} {
    \foreach \col in {0,1} {
        \node at (7*\col + 6, -5*\row + 1.5) {\Large $\cdots$};
    }
}
\foreach \col in {0,1,2} {
    \foreach \row in {0,1} {
        \node at (7*\col + 2.5, -5*\row -1) {\Large $\vdots$};
    }
}

\node at (7*0 + 6, -5*0 -1) {\Large $\ddots$}; 
\node at (7*1 + 6, -5*0 -1) {\Large $\ddots$}; 
\node at (7*0 + 6, -5*1 -1) {\Large $\ddots$};
\node at (7*1 + 6, -5*1 -1) {\Large $\ddots$};

\end{tikzpicture}
\caption{$ab$ deep holes enclosed by a $(3ab + 2a + 2b + 1)$-omino}\label{fig3a}
\end{center}
\end{figure}
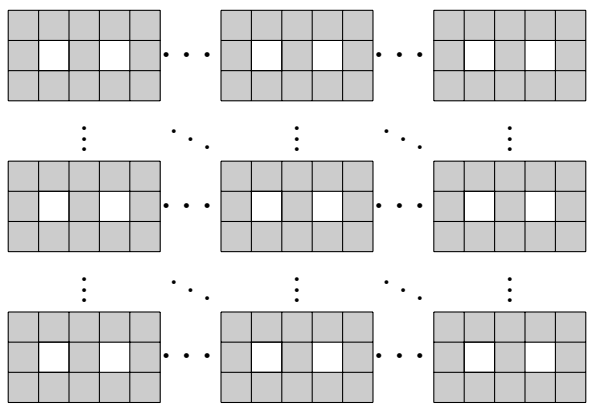

Specifially, for $n= 5k+3$ we have $h_n\geq k$. We can add up to four unit cells to a $(2k+1)\times 3$ rectangle without producing a new hole, so we obtain $h_n\geq k$ for $n=5k+4$, $5k+5$, $5k+6$, $5k+7$, see Figure \ref{fig3}. But this means that \begin{align}\label{petina}
  h_n \geq \left[\frac{n-3}{5}\right]  
\end{align}

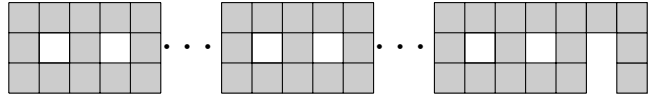
\begin{figure}[h]
\begin{center}
\begin{tikzpicture}[scale=0.4]

\foreach \row in {0} {
    \foreach \col in {0,1,2} {
        \begin{scope}[xshift=7*\col cm, yshift=-5*\row cm]
            \foreach \x/\y in {0/0,1/0,2/0,3/0,4/0, 
                               0/1,    2/1,    4/1, 
                               0/2,1/2,2/2,3/2,4/2} {
                \fill[gray!40] (\x,\y) rectangle ++(1,1);
            }
            \fill[white] (1,1) rectangle ++(1,1);
            \draw (0,0) grid (5,3);
            \draw (1,1) rectangle ++(1,1);
        \end{scope}
    }
}
\foreach \row in {0} {
    \foreach \col in {0,1} {
        \node at (7*\col + 6, -5*\row + 1.5) {\Large $\cdots$};
    }
}
\foreach \x in {19} {
    \foreach \y in {2} {
        \fill[gray!40] (\x,\y) rectangle ++(1,1);
        \draw (\x,\y) rectangle ++(1,1);
    }
}
\foreach \x in {20} {
    \foreach \y in {0,1,2} {
        \fill[gray!40] (\x,\y) rectangle ++(1,1);
        \draw(\x,\y) rectangle ++(1,1);
    }
}

\end{tikzpicture}
\caption{Polyminoes enclosing $\left[\frac{n-3}{5}\right]$ holes for $n\equiv 2 \pmod{5}$}\label{fig3}
\end{center}
\end{figure}

This bound can be improved further. But before we construct a particular example that yields a better lower bound, let us recall a classical result that we will use to obtain an upper bound for $h_n$. We will apply the celebrated Pick's Theorem, discovered by Georg Alexander Pick \cite{Pick}. It is a method used to calculate the area of a lattice polygon by counting the number of interior and boundary lattice points of the given lattice polygon in a square grid. Initially, it was used to determine the area of a simple polygon without holes and non-intersecting edges did not; however, it was later expanded to determine the area of polygons with $k$ holes, provided that their boundaries are disjoint, as seen in \cite{Funkenbusch}. 
\par
For a given simple polygon or even for a polygon with holes, whose vertices lie on lattice points with integer coordinates in a Cartesian plane, Pick's theorem can be used to calculate the area of the polygon. Pick's theorem states that if a simple polygon $P$ has vertices at lattice points and if $b$ is the number of lattice points on the boundary and $i$ is the number of lattice points in the interior, then its area is given by 

\par
$$ \mathrm{Area} (P) = i + \frac{b}{2} - 1$$
Further, if a polygon has $k$ holes with disjoint boundaries, then the area is denoted by
\par
$$ \mathrm{Area} (P) = i + \frac{b}{2} - 1 +k$$

\section{Lower Bound}

In this section, we are going to construct a sequence of $n$-ominoes  $A_n$ that contain many unit square deep holes, effectively improving the lower bound \eqref{petina}. First, we explain the algorithm for the construction of $A_n$ and find the number $f_n$ of deep holes in $A_n$.

For each positive integer $n$ there exists a unique $a$ such that
\begin{align*}
    (4a+3)^2 - (2a+1)^2 &\leq n < (4a+7)^2 - (2a+3)^2
\end{align*}
or equivalently
\begin{align}\label{ina}
    12a^2 + 20a + 8 &\leq n < 12a^2 + 44a+40.
\end{align}
The difference between the two bounds is
\begin{align*}
    12a^2 + 44a+40 - (12a^2 + 20a + 8) = 24a + 32
\end{align*}
so each positive integer $n$ can be uniquely written in the form \begin{align}
    n=(12 a^2+20a +8)+k
\end{align} where $k$ and $a$ are integers such that $0 \leq k < 24a + 32$ and $a \geq -1$.

For $n=12 a^2+20a +8$, with $a\geq 0$ we set $A_n$ to be $(4a+3)\times (4a+3)$ without $(2a+1)^2$ unit squares as in Figure \ref{s0}. Other polyominoes in the sequence will be obtained by adding $k$ unit squares to this configuration. 
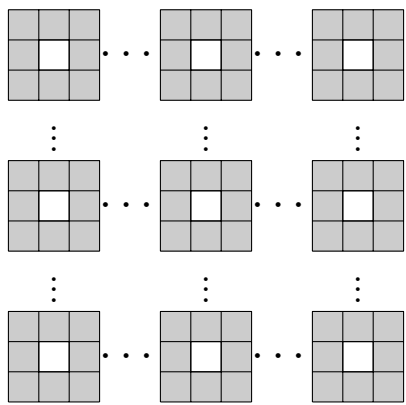
\begin{figure}[h]
\begin{center}
\begin{tikzpicture}[scale=0.4]
\foreach \row in {0,1,2} {
    \foreach \col in {0,1,2} {
        \begin{scope}[xshift=5*\col cm, yshift=-5*\row cm]
            \foreach \x/\y in {0/0,1/0,2/0,0/1,2/1,0/2,1/2,2/2} {
                \fill[gray!40] (\x,\y) rectangle ++(1,1);
            }
            \fill[white] (1,1) rectangle ++(1,1);
            \draw (0,0) grid (3,3);
            \draw (1,1) rectangle ++(1,1);
        \end{scope}
    }
}
\foreach \row in {0,1,2} {
    \foreach \col in {0,1} {
        \node at (5*\col + 4, -5*\row + 1.5) {\Large $\cdots$};
    }
}
\foreach \col in {0,1,2} {
    \foreach \row in {0,1} {
        \node at (5*\col + 1.5, -5*\row -1) {\Large $\vdots$};
    }
}
\end{tikzpicture}
\caption{Polyomino $A_n$ for $n=12a^2 + 20a + 8$}\label{s0}
\end{center}
\end{figure}

\begin{construction}\label{qn}
Depending on $0\leq k<24a+32$, the polyomino $A_n$ is constructed as follows
\begin{itemize} 
    \item If $0\leq k \leq 4$, $A_n$ is $(4a+3)\times (4a+3)$ without $(2a+1)^2$  unit squares with $k$ additional unit squares on the right side as indicated in Figure \ref{s1} for $k=4$. For these polyominoes $f_n=(2a+1)^2$
\begin{figure}[h!]
\begin{center}
\begin{tikzpicture}[scale=0.4]
\foreach \row in {0,1,2} {
    \foreach \col in {0,1,2} {
        \begin{scope}[xshift=5*\col cm, yshift=-5*\row cm]
            \foreach \x/\y in {0/0,1/0,2/0,0/1,2/1,0/2,1/2,2/2} {
                \fill[gray!40] (\x,\y) rectangle ++(1,1);
            }
            \fill[white] (1,1) rectangle ++(1,1);
            \draw (0,0) grid (3,3);
            \draw (1,1) rectangle ++(1,1);
        \end{scope}
    }
}
\foreach \row in {0,1,2} {
    \foreach \col in {0,1} {
        \node at (5*\col + 4, -5*\row + 1.5) {\Large $\cdots$};
    }
}
\foreach \col in {0,1,2} {
    \foreach \row in {0,1} {
        \node at (5*\col + 1.5, -5*\row -1) {\Large $\vdots$};
    }
}
\node at (5*0 + 4, -5*0 -1) {\Large $\ddots$}; 
\node at (5*1 + 4, -5*0 -1) {\Large $\ddots$}; 
\node at (5*0 + 4, -5*1 -1) {\Large $\ddots$};
\node at (5*1 + 4, -5*1 -1) {\Large $\ddots$};
\foreach \x in {13} {
    \foreach \y in {0} {
        \fill[gray!20] (\x,\y) rectangle ++(1,1);
        \draw[dotted] (\x,\y) rectangle ++(1,1);
    }
}
\foreach \x in {14} {
    \foreach \y in {0, 1,2} {
        \fill[gray!20] (\x,\y) rectangle ++(1,1);
        \draw[dotted] (\x,\y) rectangle ++(1,1);
    }
}

\end{tikzpicture}
\caption{Polyomino $A_n$ for $n=12a^2+20a+12$}\label{s1}
\end{center}
\end{figure}
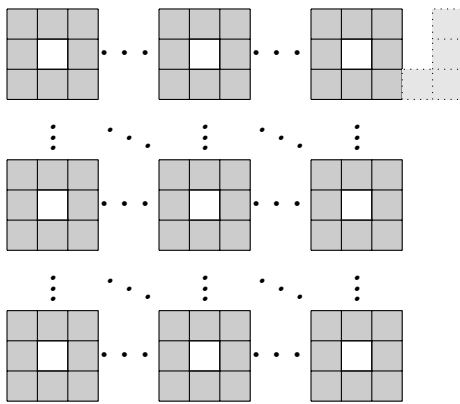

\item If $5\leq k \leq 7$  then $A_n$ is $(4a+3)\times (4a+3)$ without $(2a+1)^2$ unit squares  with a `bridge' of five unit squares on the right side with remaining $k-5$ cells trying to enclose one additional deep hole, see Figure \ref{s2} for $k=5$. For these $A_n$, $f_n=(2a+1)^2+1$.    
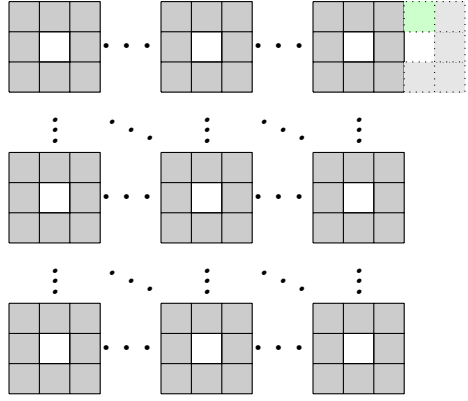
\begin{figure}[h]
\begin{center}
\begin{tikzpicture}[scale=0.4]
\foreach \row in {0,1,2} {
    \foreach \col in {0,1,2} {
        \begin{scope}[xshift=5*\col cm, yshift=-5*\row cm]
            \foreach \x/\y in {0/0,1/0,2/0,0/1,2/1,0/2,1/2,2/2} {
                \fill[gray!40] (\x,\y) rectangle ++(1,1);
            }
            \fill[white] (1,1) rectangle ++(1,1);
            \draw (0,0) grid (3,3);
            \draw (1,1) rectangle ++(1,1);
        \end{scope}
    }
}
\foreach \row in {0,1,2} {
    \foreach \col in {0,1} {
        \node at (5*\col + 4, -5*\row + 1.5) {\Large $\cdots$};
    }
}
\foreach \col in {0,1,2} {
    \foreach \row in {0,1} {
        \node at (5*\col + 1.5, -5*\row -1) {\Large $\vdots$};
    }
}
\node at (5*0 + 4, -5*0 -1) {\Large $\ddots$}; 
\node at (5*1 + 4, -5*0 -1) {\Large $\ddots$}; 
\node at (5*0 + 4, -5*1 -1) {\Large $\ddots$};
\node at (5*1 + 4, -5*1 -1) {\Large $\ddots$};
\foreach \x in {13} {
    \foreach \y in {0,2} {
        \fill[gray!20] (\x,\y) rectangle ++(1,1);
        \draw[dotted] (\x,\y) rectangle ++(1,1);
    }
}
\foreach \x in {14} {
    \foreach \y in {0,1,2} {
        \fill[gray!20] (\x,\y) rectangle ++(1,1);
        \draw[dotted] (\x,\y) rectangle ++(1,1);
    }
}
\foreach \x in {13} {
    \foreach \y in {2} {
        \fill[green!20] (\x,\y) rectangle ++(1,1);
        \draw[dotted] (\x,\y) rectangle ++(1,1);
    }
}
\end{tikzpicture}
\caption{Polyomino $A_n$ for $n=12a^2+20a+13$}\label{s2}
\end{center}
\end{figure}
\item If $8\leq k \leq 6a+5$ then $A_n$ is $(4a+3)\times (4a+3)$ without $(2a+1)^2$  unit squares with $k$ additional unit squares on its  right side arranged to enclose as many unit squares on the right side as it is indicated in Figure \ref{s3} for $k=6a+5$. Then the corresponding $f_n=(2a+1)^2 + \left\lfloor \frac{(k-2)}{3} \right\rfloor$
    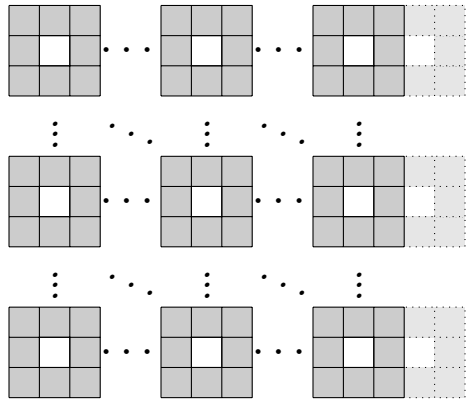
\begin{figure}[h]
\begin{center}
\begin{tikzpicture}[scale=0.4]
\foreach \row in {0,1,2} {
    \foreach \col in {0} {
        \begin{scope}[xshift=5*\col cm, yshift=-5*\row cm]
            \foreach \x/\y in {0/0,1/0,2/0,0/1,2/1,0/2,1/2,2/2}{
                \fill[gray!40] (\x,\y) rectangle ++(1,1);
            }
            \fill[white] (1,1) rectangle ++(1,1);
            \draw (0,0) grid (3,3);
            \draw (1,1) rectangle ++(1,1);
            \foreach \x in {13} {
                \foreach \y in {0,2} {
                \fill[gray!20] (\x,\y) rectangle ++(1,1);
                \draw[dotted] (\x,\y) rectangle ++(1,1);
                }
            }
            \foreach \x in {14} {
                \foreach \y in {0,1,2} {
                \fill[gray!20] (\x,\y) rectangle ++(1,1);
                \draw[dotted] (\x,\y) rectangle ++(1,1);
                }
            }
        \end{scope}
    }
}
\foreach \row in {0,1,2} {
        \foreach \col in {1,2} {
        \begin{scope}[xshift=5*\col cm, yshift=-5*\row cm]
            \foreach \x/\y in {0/0,1/0,2/0,0/1,2/1,0/2,1/2,2/2}{
                \fill[gray!40] (\x,\y) rectangle ++(1,1);
            }
            \fill[white] (1,1) rectangle ++(1,1);
            \draw (0,0) grid (3,3);
            \draw (1,1) rectangle ++(1,1);
        \end{scope}
    }
}
\foreach \row in {0,1,2} {
    \foreach \col in {0,1} {
        \node at (5*\col + 4, -5*\row + 1.5) {\Large $\cdots$};
    }
}
\foreach \col in {0,1,2} {
    \foreach \row in {0,1} {
        \node at (5*\col + 1.5, -5*\row -1) {\Large $\vdots$};
    }
}
\node at (5*0 + 4, -5*0 -1) {\Large $\ddots$}; 
\node at (5*1 + 4, -5*0 -1) {\Large $\ddots$}; 
\node at (5*0 + 4, -5*1 -1) {\Large $\ddots$};
\node at (5*1 + 4, -5*1 -1) {\Large $\ddots$};
\end{tikzpicture}
\caption{Polyomino $A_n$ for $n=12a^2+26a+13$}\label{s3}  
\end{center}
\end{figure}
    \item If $6a+6\leq k \leq 6a+9$ then $A_n$ is $(4a+5)\times (4a+3)$ with $(2a+1)\times (2a+2)$  unit squares removed and  with $k-6a-5$ unit squares attached above its  upper side, see Figure \ref{s4}. In this case, $f_n=(2a+1)^2+2a+1$.

\begin{figure}[h]
\begin{center}
\begin{tikzpicture}[scale=0.4]
\foreach \row in {0,1,2} {
    \foreach \col in {0} {
        \begin{scope}[xshift=5*\col cm, yshift=-5*\row cm]
            \foreach \x/\y in {0/0,1/0,2/0,0/1,2/1,0/2,1/2,2/2}{
                \fill[gray!40] (\x,\y) rectangle ++(1,1);
            }
            \fill[white] (1,1) rectangle ++(1,1);
            \draw (0,0) grid (3,3);
            \draw (1,1) rectangle ++(1,1);
            \foreach \x in {13} {
                \foreach \y in {0,2} {
                \fill[gray!20] (\x,\y) rectangle ++(1,1);
                \draw[dotted] (\x,\y) rectangle ++(1,1);
                }
            }
            \foreach \x in {14} {
                \foreach \y in {0,1,2} {
                \fill[gray!20] (\x,\y) rectangle ++(1,1);
                \draw[dotted] (\x,\y) rectangle ++(1,1);
                }
            }
        \end{scope}
    }
}
\foreach \row in {0,1,2} {
        \foreach \col in {1,2} {
        \begin{scope}[xshift=5*\col cm, yshift=-5*\row cm]
            \foreach \x/\y in {0/0,1/0,2/0,0/1,2/1,0/2,1/2,2/2}{
                \fill[gray!40] (\x,\y) rectangle ++(1,1);
            }
            \fill[white] (1,1) rectangle ++(1,1);
            \draw (0,0) grid (3,3);
            \draw (1,1) rectangle ++(1,1);
        \end{scope}
    }
}
\foreach \row in {0,1,2} {
    \foreach \col in {0,1} {
        \node at (5*\col + 4, -5*\row + 1.5) {\Large $\cdots$};
    }
}
\foreach \col in {0,1,2} {
    \foreach \row in {0,1} {
        \node at (5*\col + 1.5, -5*\row -1) {\Large $\vdots$};
    }
}
\foreach \y in {3} {
    \foreach \x in {10,12} {
    \fill[gray!20] (\x,\y) rectangle ++(1,1);
    \draw[dotted] (\x,\y) rectangle ++(1,1);
    }
}
\foreach \y in {4} {
    \foreach \x in {10,12} {
    \fill[gray!20] (\x,\y) rectangle ++(1,1);
    \draw[dotted] (\x,\y) rectangle ++(1,1);
    }
}
\node at (5*0 + 4, -5*0 -1) {\Large $\ddots$}; 
\node at (5*1 + 4, -5*0 -1) {\Large $\ddots$}; 
\node at (5*0 + 4, -5*1 -1) {\Large $\ddots$};
\node at (5*1 + 4, -5*1 -1) {\Large $\ddots$};
\end{tikzpicture}
\caption{Polyomino $A_n$ for $n=12a^2+26a+17$}\label{s4}
\end{center}
\end{figure}
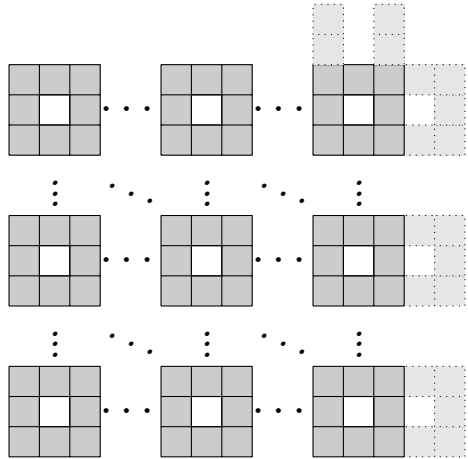

 \item If $6a+10\leq k\leq 6a+12$ then $A_n$ is $(4a+5)\times (4a+3)$ without $(2a+1)\times (2a+2)$  unit squares with 5 unit squares forming `bridge' over the upper side and the remaining cells arranged to enclose an additional unit square deep hole, see Figure \ref{s5}. It enclose $f_n=(2a+1)^2 +2a+2$ deep holes.
 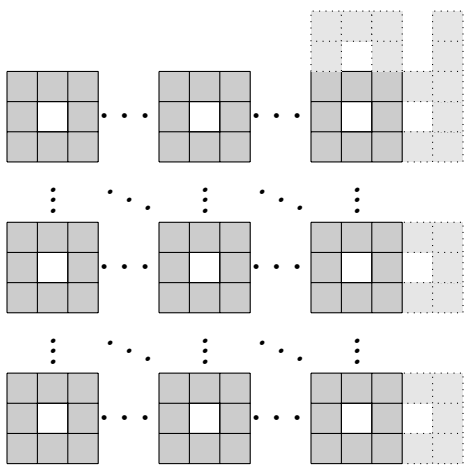
\begin{figure}[h]
\centering
\begin{tikzpicture}[scale=0.4]
\foreach \row in {0,1,2} {
    \foreach \col in {0} {
        \begin{scope}[xshift=5*\col cm, yshift=-5*\row cm]
            \foreach \x/\y in {0/0,1/0,2/0,0/1,2/1,0/2,1/2,2/2}{
                \fill[gray!40] (\x,\y) rectangle ++(1,1);
            }
            \fill[white] (1,1) rectangle ++(1,1);
            \draw (0,0) grid (3,3);
            \draw (1,1) rectangle ++(1,1);
            \foreach \x in {13} {
                \foreach \y in {0,2} {
                \fill[gray!20] (\x,\y) rectangle ++(1,1);
                \draw[dotted] (\x,\y) rectangle ++(1,1);
                }
            }
            \foreach \x in {14} {
                \foreach \y in {0,1,2} {
                \fill[gray!20] (\x,\y) rectangle ++(1,1);
                \draw[dotted] (\x,\y) rectangle ++(1,1);
                }
            }
        \end{scope}
    }
}
\foreach \row in {0,1,2} {
        \foreach \col in {1,2} {
        \begin{scope}[xshift=5*\col cm, yshift=-5*\row cm]
            \foreach \x/\y in {0/0,1/0,2/0,0/1,2/1,0/2,1/2,2/2}{
                \fill[gray!40] (\x,\y) rectangle ++(1,1);
            }
            \fill[white] (1,1) rectangle ++(1,1);
            \draw (0,0) grid (3,3);
            \draw (1,1) rectangle ++(1,1);
        \end{scope}
    }
}
\foreach \row in {0,1,2} {
    \foreach \col in {0,1} {
        \node at (5*\col + 4, -5*\row + 1.5) {\Large $\cdots$};
    }
}
\foreach \col in {0,1,2} {
    \foreach \row in {0,1} {
        \node at (5*\col + 1.5, -5*\row -1) {\Large $\vdots$};
    }
}
\foreach \y in {3} {
    \foreach \x in {10,12,14} {
    \fill[gray!20] (\x,\y) rectangle ++(1,1);
    \draw[dotted] (\x,\y) rectangle ++(1,1);
    }
}
\foreach \y in {4} {
    \foreach \x in {10,11,12,14} {
    \fill[gray!20] (\x,\y) rectangle ++(1,1);
    \draw[dotted] (\x,\y) rectangle ++(1,1);
    }
}
\node at (5*0 + 4, -5*0 -1) {\Large $\ddots$}; 
\node at (5*1 + 4, -5*0 -1) {\Large $\ddots$}; 
\node at (5*0 + 4, -5*1 -1) {\Large $\ddots$};
\node at (5*1 + 4, -5*1 -1) {\Large $\ddots$};
\end{tikzpicture}
\caption{Polyomino $A_n$ for $n=12a^2+26a+20$}\label{s5}
\end{figure}

    \item If $6a+13\leq k \leq 12 a+13$ then $A_n$ is $(4a+5)\times (4a+3)$ without $(2a+1)\times (2a+2)$  unit squares  with $k-6a-5$ squares arranged along the upper side to enclose as many unit squares deep holes, as in Figure \ref{s6}. Then $f_n=(2a+1)^2 +2 +\left\lfloor \frac{k-10}{3} \right\rfloor$ 
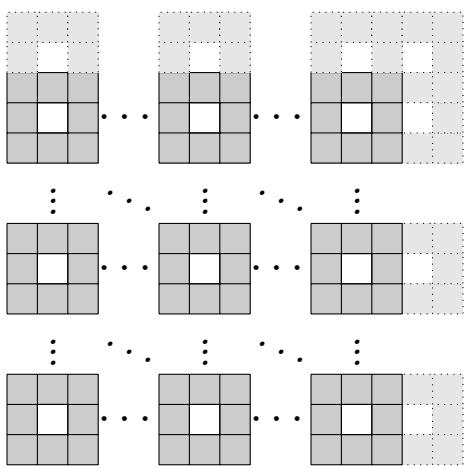
\begin{figure}[h]
\centering
\begin{tikzpicture}[scale=0.4]
\foreach \row in {0,1,2} {
    \foreach \col in {0} {
        \begin{scope}[xshift=5*\col cm, yshift=-5*\row cm]
            \foreach \x/\y in {0/0,1/0,2/0,0/1,2/1,0/2,1/2,2/2}{
                \fill[gray!40] (\x,\y) rectangle ++(1,1);
            }
            \fill[white] (1,1) rectangle ++(1,1);
            \draw (0,0) grid (3,3);
            \draw (1,1) rectangle ++(1,1);
            \foreach \x in {13} {
                \foreach \y in {0,2} {
                \fill[gray!20] (\x,\y) rectangle ++(1,1);
                \draw[dotted] (\x,\y) rectangle ++(1,1);
                }
            }
            \foreach \x in {14} {
                \foreach \y in {0,1,2} {
                \fill[gray!20] (\x,\y) rectangle ++(1,1);
                \draw[dotted] (\x,\y) rectangle ++(1,1);
                }
            }
        \end{scope}
    }
}
\foreach \row in {0,1,2} {
        \foreach \col in {1,2} {
        \begin{scope}[xshift=5*\col cm, yshift=-5*\row cm]
            \foreach \x/\y in {0/0,1/0,2/0,0/1,2/1,0/2,1/2,2/2}{
                \fill[gray!40] (\x,\y) rectangle ++(1,1);
            }
            \fill[white] (1,1) rectangle ++(1,1);
            \draw (0,0) grid (3,3);
            \draw (1,1) rectangle ++(1,1);
        \end{scope}
    }
}
\foreach \row in {0,1,2} {
    \foreach \col in {0,1} {
        \node at (5*\col + 4, -5*\row + 1.5) {\Large $\cdots$};
    }
}
\foreach \col in {0,1,2} {
    \foreach \row in {0,1} {
        \node at (5*\col + 1.5, -5*\row -1) {\Large $\vdots$};
    }
}
\foreach \y in {3} {
    \foreach \x in {0,2,5,7,10,12,14} {
    \fill[gray!20] (\x,\y) rectangle ++(1,1);
    \draw[dotted] (\x,\y) rectangle ++(1,1);
    }
}
\foreach \y in {4} {
    \foreach \x in {0,1,2,5,6,7,10,11,12,13,14} {
    \fill[gray!20] (\x,\y) rectangle ++(1,1);
    \draw[dotted] (\x,\y) rectangle ++(1,1);
    }
}
\node at (5*0 + 4, -5*0 -1) {\Large $\ddots$}; 
\node at (5*1 + 4, -5*0 -1) {\Large $\ddots$}; 
\node at (5*0 + 4, -5*1 -1) {\Large $\ddots$};
\node at (5*1 + 4, -5*1 -1) {\Large $\ddots$};
\end{tikzpicture}
\caption{Polyomino $A_n$ for $n=12a^2+32a+31$}\label{s6}
\end{figure}
    
    \item If $12a+14\leq k\leq 12 a +17$ then $A_n$ is $(4a+5)\times (4a+5)$ without $(2a+2)^2$  unit squares with $k-12 a-13$ unit squares on the left side. Here $f_n=(2a+1)^2+4a+3$.
    \item If $12a+18\leq k \leq12a+ 20$ then $A_n$ is  $(4a+5)\times (4a+5)$ without $(2a+2)^2$  unit squares, with five unit squares forming `bridge' on the left side and remaining unit squares arranged to enclose one additional unit square. In this case $f_n=(2a+1)^2+4a+4$.
    \item If $12a+21 \leq k \leq 18a +21$ then $A_n$ is  $(4a+5)\times (4a+5)$ without $(2a+2)^2$  unit squares with $k-12a-13$ unit squares arranged to enclose as many unit square deep holes as they can on its left side as in Figure \ref{s7}. The corresponding value $f_n=(2a+1)^2 + 4 +\left\lfloor \frac{k-18}{3} \right\rfloor$.
\begin{figure}[h]
\centering
\begin{tikzpicture}[xscale=0.4, yscale=0.4]
    \draw[step=1cm,white] (0,0) grid (16,15);
    \node at (6, 4) {\Large $\ddots$};
    \node at (11, 4) {\Large $\ddots$};
    \node at (6, 9) {\Large $\ddots$};
    \node at (11, 9) {\Large $\ddots$};
    \node at (6,1.5) {\Large $\cdots$};
    \node at (11,1.5) {\Large $\cdots$};
    \node at (6,6.5) {\Large $\cdots$};
    \node at (11,6.5) {\Large $\cdots$};
    \node at (6,11.5) {\Large $\cdots$};
    \node at (11,11.5) {\Large $\cdots$};
    \node at (3.5,4) {\Large $\vdots$};
    \node at (8.5,4) {\Large $\vdots$};
    \node at (13.5,4) {\Large $\vdots$};
    \node at (3.5,9) {\Large $\vdots$};
    \node at (8.5,9) {\Large $\vdots$};
    \node at (13.5,9) {\Large $\vdots$};
    \begin{scope}
    \foreach \x/\y in {
        2/0, 3/0, 4/0, 7/0, 8/0, 9/0, 12/0, 13/0, 14/0,
        2/1, 4/1, 7/1, 9/1, 12/1, 14/1,
        2/2, 3/2, 4/2, 7/2, 8/2, 9/2, 12/2, 13/2, 14/2,
        2/5, 3/5, 4/5, 7/5, 8/5, 9/5, 12/5,13/5,14/5,
        2/6, 4/6, 7/6, 9/6, 12/6, 14/6, 
        2/7, 3/7, 4/7, 7/7, 8/7, 9/7, 12/7, 13/7, 14/7,
        2/10, 3/10, 4/10, 7/10, 8/10, 9/10, 12/10, 13/10, 14/10,
        2/11, 4/11, 7/11, 9/11, 12/11, 14/11, 
        2/12, 3/12, 4/12, 7/12, 8/12, 9/12, 12/12, 13/12, 14/12
    } {
        \fill[gray!40] (\x,\y) rectangle ++(1,1);
        \draw (\x,\y) rectangle ++(1,1);
    }
    
    \foreach \x/\y in {3/1, 8/1, 13/1, 3/6, 8/6, 13/6, 3/11, 8/11, 13/11,} {
        \fill[white] (\x,\y) rectangle ++(1,1);
        \draw (\x,\y) rectangle ++(1,1);
    }
    \foreach \x/\y in {
        0/0, 1/0, 15/0, 16/0, 
        0/1, 16/1, 
        0/2, 1/2, 15/2, 16/2,
        0/5, 1/5, 15/5, 16/5,
        0/6, 16/6,
        0/7, 1/7, 15/7, 16/7,
        0/10, 1/10, 15/10, 16/10, 
        0/11, 16/11, 
        0/12, 1/12, 15/12, 16/12, 
        0/14, 2/13, 4/13, 7/13, 9/13, 12/13, 14/13, 16/13,
              1/14, 2/14, 3/14, 4/14, 7/14, 8/14, 9/14, 12/14, 13/14, 14/14, 15/14, 16/14
        } {
        \fill[gray!20] (\x,\y) rectangle ++(1,1);
        \draw[dotted] (\x,\y) rectangle ++(1,1);
    }
    \foreach \x/\y in {
        1/1
        } {
        \fill[white] (\x,\y) rectangle ++(1,1);
        \draw[dotted] (\x,\y) rectangle ++(1,1);
    }
    \end{scope}
\end{tikzpicture}
\caption{Polyomino $A_n$ for $n=12a^2+38a+29$}\label{s7}
\end{figure}
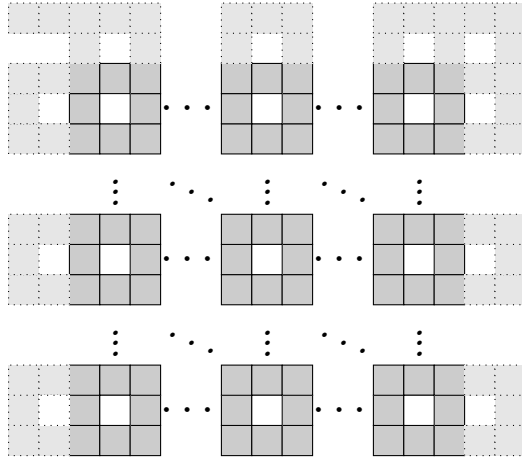

    \item If $18a+22\leq k \leq 18a+25$ then $A_n$ is $(4a+7)\times (4a+5)$ without $(2a+3)\times (2a+2)$  unit squares with $k-18a-21$ unit squares attached on the bottom trying to enclose one  additional unit square. Obviously, $f_n=(2a+1)^2+6a+5$
    \item If $18a+26\leq k \leq 18a+28$ then $A_n$ is $(4a+7)\times (4a+5)$ without $(2a+3)\times (2a+2)$  unit squares with five unit squares forming `bridge' along the bottom side and the remaining cells trying to enclose an additional unit square. Then, $f_n=(2a+1)^2+6a+6$
    \item If $18a+29 \leq k \leq 24a +31$ then $A_n$ is  $(4a+7)\times (4a+5)$ without $(2a+2)^2$  unit squares with $k-18a-21$ unit squares arranged to enclose as much unit square deep holes on the bottom side as it is depicted in Figure \ref{s8}. In this case, $A_n$ encloses $f_n=(2a+1)^2 + 6 +\left\lfloor \frac{k-26}{3} \right\rfloor$
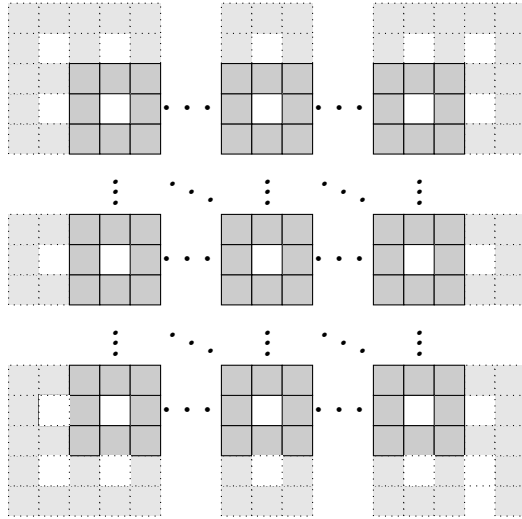
\begin{figure}[ht!]
\centering
\begin{tikzpicture}[xscale=0.4, yscale=0.4]
    \draw[step=1cm, white] (0,0) grid (16,15);
    \node at (6, 4) {\Large $\ddots$};
    \node at (11, 4) {\Large $\ddots$};
    \node at (6, 9) {\Large $\ddots$};
    \node at (11, 9) {\Large $\ddots$};
    \node at (6,1.5) {\Large $\cdots$};
    \node at (11,1.5) {\Large $\cdots$};
    \node at (6,6.5) {\Large $\cdots$};
    \node at (11,6.5) {\Large $\cdots$};
    \node at (6,11.5) {\Large $\cdots$};
    \node at (11,11.5) {\Large $\cdots$};
    \node at (3.5,4) {\Large $\vdots$};
    \node at (8.5,4) {\Large $\vdots$};
    \node at (13.5,4) {\Large $\vdots$};
    \node at (3.5,9) {\Large $\vdots$};
    \node at (8.5,9) {\Large $\vdots$};
    \node at (13.5,9) {\Large $\vdots$};
    \begin{scope}
    \foreach \x/\y in {
        0/-2, 1/-2, 2/-2, 3/-2, 4/-2, 7/-2, 8/-2, 9/-2, 12/-2, 13/-2, 14/-2, 16/-2, 0/-1, 2/-1, 4/-1, 7/-1, 9/-1,12/-1, 14/-1, 16/-1,
        0/0, 1/0, 15/0, 16/0, 
        0/1, 16/1, 
        0/2, 1/2, 15/2, 16/2,
        0/5, 1/5, 15/5, 16/5,
        0/6, 16/6,
        0/7, 1/7, 15/7, 16/7,
        0/10, 1/10, 15/10, 16/10, 
        0/11, 16/11, 
        0/12, 1/12, 15/12, 16/12, 
        0/13, 2/13, 4/13, 7/13, 9/13, 12/13, 14/13, 16/13,
        0/14, 1/14, 2/14, 3/14, 4/14, 7/14, 8/14, 9/14, 12/14, 13/14, 14/14, 15/14, 16/14
        } {
        \fill[gray!20] (\x,\y) rectangle ++(1,1);
        \draw[dotted] (\x,\y) rectangle ++(1,1);
        }
    \foreach \x/\y in {
        2/0, 3/0, 4/0, 7/0, 8/0, 9/0, 12/0, 13/0, 14/0,
        2/1, 4/1, 7/1, 9/1, 12/1, 14/1,
        2/2, 3/2, 4/2, 7/2, 8/2, 9/2, 12/2, 13/2, 14/2,
        2/5, 3/5, 4/5, 7/5, 8/5, 9/5, 12/5,13/5,14/5,
        2/6, 4/6, 7/6, 9/6, 12/6, 14/6, 
        2/7, 3/7, 4/7, 7/7, 8/7, 9/7, 12/7, 13/7, 14/7,
        2/10, 3/10, 4/10, 7/10, 8/10, 9/10, 12/10, 13/10, 14/10,
        2/11, 4/11, 7/11, 9/11, 12/11, 14/11, 
        2/12, 3/12, 4/12, 7/12, 8/12, 9/12, 12/12, 13/12, 14/12, 
    } {
        \fill[gray!40] (\x,\y) rectangle ++(1,1);
        \draw (\x,\y) rectangle ++(1,1);
    }
    \foreach \x/\y in {
    3/1, 8/1, 13/1, 3/6, 8/6, 13/6, 3/11, 8/11, 13/11} {
        \fill[white] (\x,\y) rectangle ++(1,1);
        \draw (\x,\y) rectangle ++(1,1);
    }
    \foreach \x/\y in {
    1/1, 1/-1, 3/-1, 8/-1, 13/-1,15/-1 }{
        \fill[white] (\x,\y) rectangle ++(1,1);
        \draw[dotted] (\x,\y) rectangle ++(1,1);
        }
    \end{scope}
    
\end{tikzpicture}
\caption{Polyomino $A_n$ for $n=12a^2+44a+39$}\label{s8}

\end{figure}
\end{itemize}
\end{construction}

Obviously, for all $n$
\begin{equation}\label{lw}
    h_n\geq f_n
\end{equation}

Therefore we can use the explicit form of the sequence $f_n$ to deduce the following result

\begin{proposition}\label{prop1} For $ n=(12 a^2+20a +8)+k$,
 where $k$ is an integer such that $0 \leq k < 24a + 32$, it holds \[f_n\geq (2a+1)^2 \] In particular, if $k\geq 9$ then \[f_n \geq (2a+1)^2+\frac{k}{3}-3\]
    
\end{proposition}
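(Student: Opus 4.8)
The plan is to read both statements directly off Construction~\ref{qn}, which already supplies a closed formula for $f_n$ in each of the twelve regimes partitioning $0\le k<24a+32$. The first inequality is essentially free: in every regime the listed value of $f_n$ equals $(2a+1)^2$ plus a term that is visibly non-negative — either one of the constants $1$, $2a+1$, $2a+2$, $4a+3$, $4a+4$, $6a+5$, $6a+6$, or a floor $\lfloor(k-c)/3\rfloor$ with $k\ge c$ throughout the relevant interval. Hence $f_n\ge(2a+1)^2$ with nothing more than an appeal to the construction.

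For the refined estimate I would set $g(k):=f_n-(2a+1)^2$ and verify $g(k)\ge k/3-3$ separately on each regime, which splits the work into two recurring patterns. In the \emph{growing} regimes — the four cases where $g(k)=c+\lfloor(k-d)/3\rfloor$ — I would apply the elementary bound $\lfloor x\rfloor>x-1$ to obtain $g(k)>c+(k-d)/3-1$, an affine function of $k$ whose comparison with $k/3-3$ collapses to a single inequality between the integer constants $c$ and $d$; this is routine in each of the four cases. In the \emph{plateau} regimes — those where $g$ is independent of $k$ while $k$ sweeps an interval of width three or four — the difference $g(k)-k/3$ is strictly decreasing, so it is enough to test the desired inequality at the right endpoint of the interval, again a finite arithmetic check.

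The step I expect to be delicate is precisely this last one, at the right endpoints of the later plateaus (the intervals $18a+22\le k\le18a+25$ and $18a+26\le k\le18a+28$, together with the residue-$1$ points of the final growing regime). There the excess $g$ has been frozen while $k/3$ has continued to climb, so the deficit $k/3-g(k)$ attains its global maximum over $k\ge9$, and the whole proposition rests on confirming that this maximal deficit does not exceed the constant permitted in the statement. Once these worst-case endpoints are checked, together with the floor estimate in the growing regimes and the trivial monotonicity inside each plateau, the bound $f_n\ge(2a+1)^2+k/3-3$ follows for all $k\ge9$, and via $h_n\ge f_n$ it feeds directly into the asymptotic lower bound for $h_n$.
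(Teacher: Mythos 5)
Your outline follows the same route the paper itself intends: Proposition~\ref{prop1} carries no separate argument in the text precisely because it is meant to be read off, regime by regime, from the formulas of Construction~\ref{qn}, and your first claim $f_n\ge (2a+1)^2$ is indeed immediate in every regime since the excess over $(2a+1)^2$ is always a non-negative constant or a floor term with non-negative argument. Your mechanics for the refined bound (the estimate $\lfloor x\rfloor > x-1$ on the growing regimes, a right-endpoint check on the plateaus) are also the correct mechanics.

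The gap is that the decisive step you defer, the ``finite arithmetic check'' at exactly the worst-case points you single out, does not pass; you assert that it does without computing it. On the plateau $18a+22\le k\le 18a+25$, Construction~\ref{qn} gives $f_n=(2a+1)^2+6a+5$, while at the right endpoint $k=18a+25$ the proposition demands
\[
f_n\;\ge\;(2a+1)^2+\frac{18a+25}{3}-3\;=\;(2a+1)^2+6a+\frac{16}{3},
\]
which exceeds the constructed value by $\tfrac13$. The same failure, always by exactly $\tfrac13$, occurs at $k=18a+28$ on the next plateau, and at every $k\equiv 1\pmod 3$ in the final growing regime $18a+29\le k\le 24a+31$, where $f_n=(2a+1)^2+6+\left\lfloor\frac{k-26}{3}\right\rfloor=(2a+1)^2+\frac{k-10}{3}<(2a+1)^2+\frac{k}{3}-3$. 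A concrete instance: $a=0$, $k=31$ (so $n=39$) gives $f_n=8$ from the construction, whereas the proposition claims $f_n\ge \tfrac{25}{3}$. Note also that your floor estimate already signals the problem: in that last regime it yields only $g(k)>\frac{k}{3}-\frac{11}{3}$ (here $c=6$, $d=26$), which is weaker than $\frac{k}{3}-3$, so the ``routine'' comparison of constants breaks down exactly there.

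What your method, carried out honestly, proves is $f_n\ge (2a+1)^2+\frac{k}{3}-\frac{10}{3}$ for $k\ge 9$, with equality at the points listed above; the statement with the constant $-3$ is inconsistent with the $f_n$ values of Construction~\ref{qn}. This defect originates in the proposition itself rather than in your plan, and the weaker constant $-\tfrac{10}{3}$ still implies the asymptotic lower bound \eqref{lb} for all sufficiently large $n$, so nothing downstream collapses. But a complete proof must either perform these checks and weaken the constant accordingly, or strengthen the construction at the offending values of $k$; as written, your claim that the worst-case endpoints verify the stated bound is false.
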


The inequality \eqref{ina} in $a$ has the unique integer solution  
\begin{align}
    a & = \left[\frac{\sqrt{3n+1}-5}{6}\right]
\end{align}

Combining the inequality \eqref{lw} and Proposition \ref{prop1} with the classical inequality $[x]>x-1$, we establish the following lower bound 
\begin{align}\label{lb}
    h_n & > \frac{n}{3}-\frac{16}{9} \sqrt{3n+1}+\frac{65}{9}
\end{align}

Construction \ref{qn}, as we will see later, provides the exact values of $h_n$ in many cases. Therefore, the above lower bound explains the asymptotic behaviour of this sequence.

\section{Upper Bound}

A polyomino $P$ can be considered as a finite subset of the square grid. It is a special kind of polygon whose vertices lie at the lattice points and the edges along the grid lines. Pick's theorem, a celebrated result from elementary geometric combinatorics holds for such polygons, including those that are not simple connected. Recall that by generalized Pick's formula \cite{Funkenbusch} for a polyomino containing only deep holes, it holds 
\begin{align} \label{pick} n=i+\frac{b}{2}-1+h_n
\end{align}
where $i$ denotes the number of the lattice points in the interior and $b$ on the boundary of the polyomino $P$. 

The relation \eqref{pick} will be used to obtain an upper bound  on the number of deep holes $h_n$. We start with the following observation.

\begin{proposition}\label{p1}
It holds that 
\begin{align}
b \geq 4 h_n + \tilde{b} 
\end{align}
where $\tilde{b}$ is the number of the lattice points on the outer boundary of the polyomino $P$.
\end{proposition}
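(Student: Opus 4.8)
The plan is to account for all boundary lattice points of $P$ by decomposing its boundary into connected loops. Since $P$ contains only deep holes, its topological boundary is a disjoint union of $h_n+1$ simple closed rectilinear curves: the single outer boundary loop, carrying $\tilde b$ lattice points, together with one loop $\gamma_1,\dots,\gamma_{h_n}$, one for each deep hole. First I would invoke the defining property of a deep hole, namely that no vertex of a hole lies on the outer boundary or on the boundary of another hole, to conclude that these $h_n+1$ loops are pairwise lattice-point-disjoint. This disjointness is exactly what makes the count additive: every boundary lattice point lies on precisely one loop, so
\begin{align}\label{decomp}
    b=\tilde b+\sum_{j=1}^{h_n}\#\{\text{lattice points on }\gamma_j\}.
\end{align}

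Next I would bound each summand from below. Each hole is a bounded connected component of the complement of $P$, and, since $P$ is a union of unit cells of the grid, each hole is itself a nonempty union of unit cells; in particular every $\gamma_j$ is a simple closed rectilinear lattice curve enclosing area at least $1$. The one substantive fact I need is that any such curve passes through at least four distinct lattice points. Granting this and substituting into \eqref{decomp} immediately yields $b\ge\tilde b+4h_n$, which is the assertion.

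The main obstacle is precisely the elementary fact just quoted, that the boundary of each deep hole carries at least four distinct lattice points. I would establish it through the corner count for rectilinear polygons: each $\gamma_j$ is an axis-parallel simple closed polygon, and for any such polygon the number of convex ($90^\circ$) corners exceeds the number of reflex ($270^\circ$) corners by exactly four, since the total turning around the loop is $360^\circ$. Hence $\gamma_j$ has at least four convex corners, each a distinct lattice point lying on $\gamma_j$, which gives the per-hole bound of $4$; the single unit square, being the smallest possible deep hole, shows that $4$ is attained. This turning-angle bookkeeping is the only non-routine ingredient, the remainder of the proposition being the additive accounting recorded in \eqref{decomp}.
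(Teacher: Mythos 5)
Your proposal is correct and follows essentially the same route as the paper's own proof: decompose the boundary lattice points additively into the outer loop plus the disjoint hole boundaries, then bound each hole's contribution below by four. The only difference is that you justify the per-hole bound of four via the turning-angle corner count, a fact the paper simply asserts as obvious.
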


\begin{proof}
 The total number of lattice points on the boundary is the sum of the number of the lattice points on the boundaries of the deep holes and the number of the lattice points on the outer boundary of $P$. Since each deep hole has at least four lattice points on its boundary, the inequality follows.  
\end{proof}

\begin{proposition}\label{p2}
  It holds that 
\begin{align}\label{nej}
n+1 \geq 3 h_n + \frac{\tilde{b}}{2}
\end{align}  
\end{proposition}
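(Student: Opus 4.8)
The plan is to combine the generalized Pick formula \eqref{pick} with the boundary estimate of Proposition \ref{p1}, after which the claim falls out by a single substitution. First I would rewrite \eqref{pick} so that the quantity appearing on the left of the target inequality is isolated, namely $n+1 = i + \frac{b}{2} + h_n$. This is just algebra: moving the $-1$ across converts the Pick identity into a statement about $n+1$, which is exactly the shape of \eqref{nej}.

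Next I would separate the total boundary count $b$ into its inner and outer contributions. Writing $\frac{b}{2} = \frac{b-\tilde{b}}{2} + \frac{\tilde{b}}{2}$ gives $n+1 = i + \frac{b-\tilde{b}}{2} + \frac{\tilde{b}}{2} + h_n$. Here Proposition \ref{p1} does all the work: it asserts $b \geq 4h_n + \tilde{b}$, equivalently $b - \tilde{b} \geq 4h_n$, and hence $\frac{b-\tilde{b}}{2} \geq 2h_n$. This is the step that transfers information about the hole boundaries (each deep hole contributing at least four lattice points to $b$ but none to $\tilde{b}$) into the inequality.

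Finally I would invoke the trivial bound $i \geq 0$, since the number of interior lattice points of $P$ is certainly non-negative. Substituting both estimates into the displayed expression yields $n+1 \geq 0 + 2h_n + \frac{\tilde{b}}{2} + h_n = 3h_n + \frac{\tilde{b}}{2}$, which is precisely \eqref{nej}.

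I do not expect a genuine obstacle here: the proposition is a clean consequence of two facts already established, and the derivation is essentially a rearrangement. The only point deserving attention is the bookkeeping in splitting $b$ into $\tilde{b}$ and $b-\tilde{b}$ and checking that the factor of $\tfrac12$ matches the $\tfrac{\tilde{b}}{2}$ in the target; once that is set up correctly, discarding the non-negative term $i$ is what produces the stated bound. It is worth remarking that the slack in \eqref{nej} is exactly $i$ together with any excess in Proposition \ref{p1}, which is why equality is approached precisely when the interior is empty and every hole is a single unit square.
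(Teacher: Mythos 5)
Your proof is correct and is essentially the paper's own argument: the paper likewise combines the generalized Pick formula \eqref{pick}, the bound $b \geq 4h_n + \tilde{b}$ from Proposition \ref{p1}, and the observation $i \geq 0$ to obtain \eqref{nej}. Your write-up merely makes the substitution and bookkeeping explicit, which the paper leaves to the reader.
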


\begin{proof}
 The minimum possible number of interior lattice points of $P$ is zero.  Using equation \eqref{pick} and Proposition \ref{p1}, yields the stated inequality.
\end{proof}

Let $\bar{P}$ be the union of the $n$-omino $P$ and its deep holes. Then $\bar{P}$ is a simple connected polyomino with area at least $n+h_n$. The perimeter of $\bar{P}$ is equal to $\tilde{b}$. Now we can apply the classical isoperimetric  inequality, first proved rigorously by Schwartz in \cite{Schwartz}. It implies that

\begin{align}\label{iso1}
    {\tilde{b}}^2\geq 4\pi \, \mathrm{Area} (\bar{P}) \geq 4\pi (n+h_n)
\end{align}

Since $\tilde{b}$ and $n+h_n$ are positive integers, we actually have the strict inequality.

\begin{align}\label{iso2}
    \tilde{b}>2\sqrt{(n+h_n)\pi}
\end{align}

Combining inequalities \eqref{nej} and \eqref{iso2} we obtain the following result

\begin{proposition} It holds that
    \begin{align}\label{trecina}
        n+1>3h_n+\sqrt{(n+h_n)\pi}
    \end{align}
\end{proposition}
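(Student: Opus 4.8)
The plan is to eliminate the auxiliary quantity $\tilde{b}$ by chaining together the two bounds already in hand: the arithmetic bound of Proposition \ref{p2}, namely \eqref{nej}, which reads $n+1 \geq 3h_n + \tilde{b}/2$, and the strict isoperimetric bound \eqref{iso2}, namely $\tilde{b} > 2\sqrt{(n+h_n)\pi}$. In both inequalities the quantity $\tilde{b}$ appears on the side that makes a single substitution possible, so the argument should close in one step.

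First I would halve \eqref{iso2} to obtain $\tilde{b}/2 > \sqrt{(n+h_n)\pi}$. This exhibits a strict lower bound for precisely the term $\tilde{b}/2$ that occurs on the right-hand side of \eqref{nej}. Feeding this bound into \eqref{nej} then gives
\[ n+1 \;\geq\; 3h_n + \frac{\tilde{b}}{2} \;>\; 3h_n + \sqrt{(n+h_n)\pi}, \]
and comparing the two ends of this chain yields the desired strict inequality \eqref{trecina}.

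The only point requiring attention is the bookkeeping on strictness: \eqref{nej} is a weak inequality ($\geq$) while \eqref{iso2} is strict ($>$), and their composition correctly produces a strict inequality, which is what justifies the final $>$ in \eqref{trecina}. Beyond this there is no genuine obstacle; all the substantive work — the lattice-point count behind Proposition \ref{p1}, the use of the generalized Pick formula \eqref{pick} in Proposition \ref{p2}, and the passage from the continuous isoperimetric inequality \eqref{iso1} to its strict integer form \eqref{iso2} — has already been carried out, so this proposition is purely the assembly step that merges the combinatorial and geometric estimates into a single bound on $h_n$.
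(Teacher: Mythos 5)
Your proof is correct and matches the paper's own argument: the paper obtains \eqref{trecina} precisely by combining \eqref{nej} with \eqref{iso2}, exactly the one-step substitution you carry out, and your note that the weak inequality composed with the strict one yields a strict conclusion is the right bookkeeping.
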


From \eqref{trecina} we immediately obtain $n\geq 3 h_n$, but we are going to improve this inequality.

\begin{theorem} It holds that 
\begin{align}\label{upper}
h_n< \frac{6n+6+\pi-\sqrt{48n \pi+12 \pi+\pi^2}}{18}   
\end{align}
\end{theorem}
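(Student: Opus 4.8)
The plan is to treat the inequality \eqref{trecina} purely as a quadratic inequality in the single unknown $h_n$ and solve it explicitly. Starting from $n+1 > 3h_n + \sqrt{(n+h_n)\pi}$, I would first isolate the radical,
\[
\sqrt{(n+h_n)\pi} < n+1-3h_n .
\]
The right-hand side is strictly positive: the excerpt already records that \eqref{trecina} yields $n \ge 3h_n$, whence $n+1-3h_n \ge 1 > 0$. Both sides being nonnegative, squaring preserves the strict inequality and gives
\[
(n+h_n)\pi < (n+1-3h_n)^2 .
\]

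Next I would expand and collect powers of $h_n$. Writing $x=h_n$, the inequality rearranges to
\[
9x^2 - \bigl[6(n+1)+\pi\bigr]x + \bigl[(n+1)^2-\pi n\bigr] > 0 ,
\]
a quadratic with positive leading coefficient. A routine discriminant computation,
\[
\bigl[6(n+1)+\pi\bigr]^2 - 36\bigl[(n+1)^2-\pi n\bigr] = 48\pi n + 12\pi + \pi^2 ,
\]
produces the two roots
\[
x_{\pm} = \frac{6n+6+\pi \pm \sqrt{48n\pi+12\pi+\pi^2}}{18},
\]
and the smaller root $x_-$ is exactly the bound asserted in \eqref{upper}.

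The only point requiring care is the selection of the correct root. Since the parabola opens upward, the inequality forces $h_n < x_-$ or $h_n > x_+$, and I must exclude the latter. Here I would invoke the crude a priori estimate $h_n \le n/3$ already obtained from \eqref{trecina}: because $x_+ > \tfrac{6n}{18} = \tfrac{n}{3} \ge h_n$, the value $h_n$ cannot exceed $x_+$, so it must lie on the left branch, giving $h_n < x_-$. The strictness of the conclusion is inherited from the strictness of \eqref{trecina}. This placement of $h_n$ on the correct side of the parabola --- guaranteed precisely by the elementary bound $n\ge 3h_n$ --- is the only step that draws on an external input; everything else is algebraic manipulation, so I do not anticipate a genuine obstacle.
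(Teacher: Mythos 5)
Your proposal is correct and follows exactly the paper's argument: isolate the radical in \eqref{trecina}, square (justified by $n\ge 3h_n$), obtain the quadratic $9h_n^2-(6n+6+\pi)h_n+(n+1)^2-n\pi>0$, and select the smaller root using the a priori bound $h_n\le n/3$. You have merely made explicit the discriminant computation and root-selection step that the paper compresses into ``solving the inequality we yield the desired bound.''
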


\begin{proof} We solve the inequality for $h_n$, that is,   \[n+1-3h_n >\sqrt{(n+h_n)\pi}\] Having in mind that $n\geq 3 h_n$, we square both sides. After squaring and rearranging,  we obtain a quadratic inequality  \[9h_n^2-(6n+6+\pi)h_n+(n+1)^2-n\pi>0\] Taking the above-mentioned constraints in account and solving the inequality we yield the desired bound. 
\end{proof}

Therefore, we obtain the following upper bound 
\begin{align}\label{ub}
    h_n < \frac{n}{3}-\frac{1}{18}\sqrt{48n \pi+\pi^2+12 \pi}+\frac{6+\pi}{18}
\end{align}

\section{Asymptotics of $h_n$}

In the previous sections, we established the inequalities \eqref{lb} and \eqref{ub}. Together, they give the following bounds for $h_n$.
freerer
\begin{equation}\label{lub}
      \frac{n}{3}-\frac{16}{9} \sqrt{3n+1}+o(\sqrt{n})<  h_n < \frac{n}{3}-\frac{1}{18}\sqrt{48n \pi+\pi^2+12 \pi}+o(\sqrt{n})
\end{equation}

Dividing by $n$, we obtain 

\begin{equation}\label{lub1}
      \frac{1}{3}-\frac{16}{9} \sqrt{\frac{3}{n}+\frac{1}{n^2}}+o\left(\frac{1}{\sqrt{n}}\right)<  \frac{h_n}{n} < \frac{1}{3}-\frac{1}{18}\sqrt{\frac{48\pi}{n}+\frac{\pi^2+12 \pi}{n^2}}+o\left(\frac{1}{\sqrt{n}}\right)
\end{equation}

From here, as $n\to +\infty$ by Two Policemen Lemma, we obtain the following result

\begin{theorem} For sufficiently large $n$, the maximal number of deep holes in an $n$-omino satisfies $$h_n\approx\frac{n}{3}.$$
    
\end{theorem}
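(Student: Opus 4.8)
The plan is to combine the two bounds \eqref{lb} and \eqref{ub} established in the previous sections and to pass to the limit by a squeeze argument. Both the lower and the upper estimate for $h_n$ have the shape $\frac{n}{3}$ plus a correction term of order $\sqrt{n}$, so after dividing by $n$ these corrections become $O(1/\sqrt{n})$ and vanish as $n\to\infty$.

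First I would take the two-sided estimate \eqref{lub} and divide every term by $n$, exactly as recorded in \eqref{lub1}, namely
$$\frac{1}{3}-\frac{16}{9}\sqrt{\tfrac{3}{n}+\tfrac{1}{n^2}}+o\!\left(\tfrac{1}{\sqrt{n}}\right)<\frac{h_n}{n}<\frac{1}{3}-\frac{1}{18}\sqrt{\tfrac{48\pi}{n}+\tfrac{\pi^2+12\pi}{n^2}}+o\!\left(\tfrac{1}{\sqrt{n}}\right).$$
Next I would observe that as $n\to\infty$ the two radicals $\sqrt{3/n+1/n^2}$ and $\sqrt{48\pi/n+(\pi^2+12\pi)/n^2}$ both tend to $0$, and likewise every $o(1/\sqrt{n})$ term tends to $0$. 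Hence the leftmost and the rightmost expressions both converge to $\frac{1}{3}$. Writing $\ell_n$ for the lower sequence and $u_n$ for the upper sequence, we have $\ell_n<\frac{h_n}{n}<u_n$ with $\ell_n\to\frac{1}{3}$ and $u_n\to\frac{1}{3}$, so by the squeeze theorem (the ``Two Policemen Lemma'') we conclude $\lim_{n\to\infty}\frac{h_n}{n}=\frac{1}{3}$, which is precisely the assertion $h_n\approx\frac{n}{3}$.

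The main obstacle is not in this final step, which is a routine squeeze: all of the genuine difficulty has already been absorbed into establishing the lower bound \eqref{lb} through Construction \ref{qn} and Proposition \ref{prop1}, and the upper bound \eqref{ub} through the generalized Pick formula \eqref{pick} together with the isoperimetric inequality \eqref{iso2}. The only care needed here is to confirm that the $\sqrt{n}$-order corrections appearing in both bounds are genuinely of lower order than $n$ — which they manifestly are — so that they disappear after normalization by $n$, leaving the common limit $\frac{1}{3}$.
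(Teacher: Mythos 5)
Your proposal is correct and follows exactly the paper's own argument: it combines the lower bound \eqref{lb} and the upper bound \eqref{ub} into the two-sided estimate \eqref{lub}, divides by $n$ as in \eqref{lub1}, and applies the squeeze theorem to conclude $\lim_{n\to\infty} h_n/n = \tfrac{1}{3}$. Nothing is missing; the only substance, as you rightly note, lies in the previously established bounds, not in this limiting step.
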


The above result is analogous to Rold\'{a}n-Roa's result \cite{Roldan} for holes, that is maximal number of holes grows asymptotically $\frac{n}{2}$.

\section{Exact Value}

In this section, we refine the upper bound established in Section 4. This refinement enables us to determine the exact value of $h_n$ in some instances.

Consider an $n$-omino with deep holes. Among its edges, we distinguish those belonging to the outer perimeter, to the inner perimeter, and to the inner edges. Let the number of each of these edges be $s_o$, $s_i$, and $c$, respectively. Then it holds

\begin{align}\label{njed}
    4n & =s_o+s_i+2 c
\end{align}

If an $n$-omino encloses $h_n$ deep holes, then by the result of Harary and Harborth in \cite{harary}, it follows that $s_o \geq 2\left \lceil 2\sqrt{n+h_n}\right \rceil$. Furthermore, since the perimeter of a deep hole is at least four, $s_i \geq 4 h_n$.

Next, we estimate the number of internal edges. A polygon whose angles are only $90^\circ$ and $270^\circ$ has at least four right angles.  Each of the right angles on the boundary of a deep hole is the endpoint of two internal edges. The boundary lattice point on the outer perimeter that is not a vertex is the endpoint of exactly one internal edge. A vertex of a right angle on the outer perimeter is not the endpoint of an internal edge, whereas a vertex of $270^\circ$  is the endpoint of two internal edges. Therefore, 
\begin{align}\label{nit}
    2c & \geq s_o-4+8h_n
\end{align}

Combining \eqref{njed} and \eqref{nit} we establish the following sequence of inequalities

\begin{equation}
    4n\geq 2 s_o+s_i+8 h_n-4\geq 4 \left \lceil 2\sqrt{n+h_n}\right \rceil+12 h_n-4
\end{equation}

From the above inequality, it follows that 
\begin{equation}\label{imp}
    n\geq 3 h_n+ 2\sqrt{n+h_n}-1
\end{equation}

\begin{proposition} \begin{align}\label{bup}
    h_n &\leq \frac{n}{3}-\frac{4}{9} \sqrt{3n+1}+\frac{5}{9}
\end{align}
\end{proposition}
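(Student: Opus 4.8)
The plan is to derive the stated bound directly from inequality \eqref{imp}, treating it as a quadratic constraint on $h_n$. Starting from $n \geq 3h_n + 2\sqrt{n+h_n} - 1$, I would isolate the square-root term and write $n + 1 - 3h_n \geq 2\sqrt{n+h_n}$. Before squaring, I must confirm the left-hand side is nonnegative; this follows from the already-established crude bound $n \geq 3h_n$ (noted just before the Theorem in Section~4), which guarantees $n + 1 - 3h_n \geq 1 > 0$. Squaring both sides then yields $(n+1-3h_n)^2 \geq 4(n+h_n)$.

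Next I would expand and collect terms into a standard quadratic inequality in $h_n$. Writing $u = h_n$, the expansion gives $9u^2 - (6n+6)u + (n+1)^2 \geq 4n + 4u$, which rearranges to $9u^2 - (6n+10)u + (n+1)^2 - 4n \geq 0$, i.e. $9h_n^2 - (6n+10)h_n + (n-1)^2 \geq 0$. Since $h_n$ is small (on the order of $n/3$) relative to the larger root of this quadratic, $h_n$ must lie at or below the smaller root. Applying the quadratic formula, the smaller root is $\frac{(6n+10) - \sqrt{(6n+10)^2 - 36(n-1)^2}}{18}$. Computing the discriminant, $(6n+10)^2 - 36(n-1)^2 = (36n^2 + 120n + 100) - (36n^2 - 72n + 36) = 192n + 64 = 64(3n+1)$, so its square root is $8\sqrt{3n+1}$. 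Substituting gives $h_n \leq \frac{6n + 10 - 8\sqrt{3n+1}}{18} = \frac{n}{3} - \frac{4}{9}\sqrt{3n+1} + \frac{5}{9}$, which is exactly \eqref{bup}.

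The one step requiring genuine care, rather than routine algebra, is justifying that $h_n$ corresponds to the \emph{smaller} root of the quadratic and not the larger one. The quadratic $9h_n^2 - (6n+10)h_n + (n-1)^2$ opens upward, so the inequality $\geq 0$ is satisfied both for $h_n$ below the smaller root and for $h_n$ above the larger root; I must rule out the latter regime. This is precisely where the crude bound $n \geq 3h_n$ is indispensable: the larger root is approximately $\frac{6n+10 + 8\sqrt{3n+1}}{18} > \frac{n}{3}$, which would force $h_n > n/3 \geq h_n$, a contradiction once the crude estimate is invoked. Hence $h_n$ is pinned to the branch below the smaller root, and the claimed inequality follows. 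I would state this branch-selection argument explicitly, since omitting it is the natural gap a reader would flag.

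Thus the proof is short: establish nonnegativity of $n+1-3h_n$ via $n \geq 3h_n$, square, simplify the discriminant to the clean form $64(3n+1)$, and select the correct root using the same crude bound. No deeper machinery beyond \eqref{imp} and elementary algebra is needed.
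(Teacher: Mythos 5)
Your proposal is correct and follows essentially the same route as the paper: square the inequality \eqref{imp} to get $(n+1-3h_n)^2 \geq 4(n+h_n)$, rearrange into the quadratic $9h_n^2-(6n+10)h_n+(n-1)^2\geq 0$, and use the crude bound $h_n \leq n/3$ to select the smaller root, whose discriminant simplifies to $64(3n+1)$. Your explicit justification of the branch selection (and of the sign condition before squaring) merely spells out what the paper compresses into the phrase ``Since $0\leq h_n < \frac{n}{3}$,'' so there is no substantive difference.
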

\begin{proof}
     The inequality \ref{imp} implies $$ (n+1 -3 h_n)^2\geq 4(n+h_n)$$ This inequality can be written as $$9 h_n^2-(6n+10) h_n+n^2-2n+1\geq 0$$ Since $0\leq h_n < \frac{n}{3}$, it follows that $$h_n\leq\frac{6n+10-\sqrt{(6n+10)^2-4 \cdot 9 (n-1)^2}}{18}$$ which simplifies to the wanted inequality.
\end{proof}

The upper bound in \eqref{bup} is more precise than the one established in Section 4, which was based on the isoperimetric inequality. This result leads to the following corollary.

\begin{corollary}\begin{align}\label{bupe}
    h_n &\leq \left \lfloor \frac{n}{3}-\frac{4}{9} \sqrt{3n+1}+\frac{5}{9}\right\rfloor
\end{align}
    
\end{corollary}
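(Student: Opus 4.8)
The plan is to deduce this integer-valued bound directly from the real-valued estimate already in hand, namely inequality \eqref{bup} of the preceding Proposition, by exploiting the fact that $h_n$ is a non-negative integer. Since $h_n$ is defined as the maximal number of deep holes that an $n$-omino can enclose, it is by its very definition a non-negative integer for every $n$, and this is the only extra ingredient needed.

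First I would record the standard floor principle: for any integer $m$ and any real number $x$, one has $m \leq x$ if and only if $m \leq \lfloor x \rfloor$. Then I would apply this with $m = h_n$ and
\[
x = \frac{n}{3} - \frac{4}{9}\sqrt{3n+1} + \frac{5}{9}.
\]
The real bound $h_n \leq x$ supplied by \eqref{bup} then upgrades at once to $h_n \leq \lfloor x \rfloor$, which is precisely the assertion \eqref{bupe}. No squaring, rearranging, or case analysis is required beyond what was already carried out to obtain \eqref{bup}.

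There is essentially no analytic obstacle in this step, since all the substantive work lies in establishing the sharper real inequality \eqref{bup} from \eqref{imp}; the corollary is then a one-line consequence of integrality. The only point I would be careful about is that the floor is taken of the complete expression on the right-hand side, not term by term, so that the passage from \eqref{bup} to \eqref{bupe} introduces no spurious rounding. In short, the proof I envisage consists of invoking the combinatorial fact that $h_n \in \mathbb{Z}_{\geq 0}$ together with the monotonicity of the floor function applied to \eqref{bup}.
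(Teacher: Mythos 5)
Your proposal is correct and is exactly the argument the paper intends: the corollary follows from inequality \eqref{bup} by the integrality of $h_n$, using the principle that an integer $m$ satisfies $m \leq x$ if and only if $m \leq \lfloor x \rfloor$. The paper gives no separate proof precisely because this one-line floor argument is the whole content of the step.
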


For $n \leq 7$, $h_n=0$. Therefore, for the remainder of the paper, we assume $n \geq 8$ and $a \geq 0$.

\begin{corollary} If $n=12 a^2+20 a+8$, then $h_n=4 a^2+4a+1$.
    
\end{corollary}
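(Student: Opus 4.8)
The plan is to sandwich $h_n$ between matching lower and upper bounds, both of which are already available from the preceding sections. For the lower bound, I would invoke Construction \ref{qn}: when $n=12a^2+20a+8$ we are in the regime $0\le k\le 4$ (indeed $k=0$), for which the construction yields a polyomino $A_n$ with $f_n=(2a+1)^2$ deep holes. Since $(2a+1)^2=4a^2+4a+1$, inequality \eqref{lw} immediately gives $h_n\ge f_n=4a^2+4a+1$. This half requires no computation beyond recalling the case analysis of the construction.

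For the upper bound I would feed $n=12a^2+20a+8$ into the refined estimate \eqref{bupe}, namely $h_n\le\left\lfloor \frac{n}{3}-\frac{4}{9}\sqrt{3n+1}+\frac{5}{9}\right\rfloor$. The crucial observation, and the only real content of the argument, is that the discriminant term is a perfect square for this value of $n$:
\[
3n+1=3(12a^2+20a+8)+1=36a^2+60a+25=(6a+5)^2,
\]
so that $\sqrt{3n+1}=6a+5$ exactly. Substituting this into the bracketed expression and clearing denominators, I would show that all the non-integer pieces cancel and the expression collapses to
\[
\frac{36a^2+60a+24-(24a+20)+5}{9}=\frac{36a^2+36a+9}{9}=4a^2+4a+1.
\]
Because this is already an integer, the floor in \eqref{bupe} leaves it unchanged, and we conclude $h_n\le 4a^2+4a+1$.

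Combining the two bounds forces equality, $h_n=4a^2+4a+1=(2a+1)^2$, which is exactly the claim. The argument is essentially a verification rather than a construction: the genuinely clever step has already been done in setting up the value $n=12a^2+20a+8$, which is engineered precisely so that $3n+1$ is a perfect square and the isoperimetric-type upper bound from \eqref{bup}–\eqref{bupe} meets the explicit construction. I expect the only place where care is needed is the algebraic simplification of the floor expression; since $3n+1=(6a+5)^2$ is exact, there is no rounding subtlety, and the main obstacle is simply confirming that the construction's count $f_n=(2a+1)^2$ and the collapsed upper bound agree, which they do.
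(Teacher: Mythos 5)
Your proposal is correct and matches the paper's own argument: both combine the lower bound $h_n\geq f_n=(2a+1)^2$ from Construction \ref{qn} with the refined upper bound \eqref{bup}, using the fact that $3n+1=(6a+5)^2$ is a perfect square so the bound evaluates exactly to $4a^2+4a+1$. The only cosmetic difference is that you invoke the floored version \eqref{bupe} while the paper uses \eqref{bup} directly; since the expression is an integer, the two are identical here.
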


\begin{proof}
    By Construction \ref{qn} and  inequality \eqref{bup}, for $n=12 a^2+20 a+8$, we have $$ 4 a^2+4 a+1\leq h_n\leq \frac{3n+5-4\sqrt{3n+1}}{9}=4a^2+4 a+1.$$
\end{proof}

By a similar argument, we obtain the following result.

\begin{corollary} If $n=12 a^2+32 a+21$ then $h_n=4 a^2+8a+4$.
    
\end{corollary}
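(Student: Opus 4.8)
The plan is to mirror the proof of the corollary for $n=12a^2+20a+8$, squeezing $h_n$ between a lower bound coming from Construction \ref{qn} and the upper bound \eqref{bup}. First I would verify that $n=12a^2+32a+21$ falls into one of the ranges of Construction \ref{qn}. Writing $n=(12a^2+20a+8)+k$, we get $k=12a+13$, which is the right endpoint of the range $6a+13\leq k\leq 12a+13$. In that case the construction gives
\[
f_n=(2a+1)^2+2+\left\lfloor\frac{k-10}{3}\right\rfloor=(2a+1)^2+2+\left\lfloor\frac{12a+3}{3}\right\rfloor=(2a+1)^2+2+(4a+1)=4a^2+8a+4.
\]
By \eqref{lw} this yields the lower bound $h_n\geq f_n=4a^2+8a+4$.

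Next I would compute the upper bound \eqref{bup} at $n=12a^2+32a+21$. The key quantity is $\sqrt{3n+1}=\sqrt{36a^2+96a+64}=\sqrt{(6a+8)^2}=6a+8$, which is an exact integer, exactly as in the earlier corollary where $3n+1$ was a perfect square. Substituting into \eqref{bup},
\[
h_n\leq\frac{n}{3}-\frac{4}{9}(6a+8)+\frac{5}{9}=\frac{12a^2+32a+21}{3}-\frac{24a+32}{9}+\frac{5}{9}.
\]
Putting everything over a common denominator of $9$ gives $\frac{3(12a^2+32a+21)-(24a+32)+5}{9}=\frac{36a^2+72a+36}{9}=4a^2+8a+4$, so the upper bound equals the lower bound and the two coincide.

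The main obstacle—really the only place requiring care—is confirming that the floor in the construction formula evaluates exactly and that $3n+1$ is a perfect square, since these are what force the bounds to meet with no gap. The identity $36a^2+96a+64=(6a+8)^2$ is the crucial arithmetic coincidence that makes \eqref{bup} tight for this subfamily, just as $36a^2+60a+25=(6a+5)^2$ did for the previous corollary. Once both computations land on $4a^2+8a+4$, the chain $4a^2+8a+4\leq h_n\leq 4a^2+8a+4$ forces equality, completing the proof.
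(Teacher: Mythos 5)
Your proof is correct and follows exactly the paper's intended argument: the paper proves this corollary "by a similar argument" to the preceding one, meaning precisely your squeeze between the lower bound $f_n=4a^2+8a+4$ from Construction \ref{qn} (here $k=12a+13$) and the upper bound \eqref{bup}, which evaluates to the same value because $3n+1=(6a+8)^2$ is a perfect square. Nothing further is needed.
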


These two corollaries indicate that Construction \ref{qn}, which achieves the lower bound, provides the exact value of $h_n$ for all $n$. In the remainder of the paper, we demonstrate this result for an infinite subset of positive integers.

Let $n=12a^2+20a+8+k$, where $1\leq k\leq 24 a+31$ and $a\geq 0$. Then the inequality \eqref{bup} may be written as 
\begin{align}
    h_n & \leq (2 a+1)^2+\left \lfloor \frac{24 a+20+3k-4\sqrt{(6a+5)^2+3k}}{9}\right\rfloor
\end{align}

If $k\leq 4 a+3$, then $\sqrt{(6a+5)^2+3k}=6a+5+\alpha$, where $0<\alpha<1$, so in this case 
\begin{align}
    h_n\leq (2a+1)^2 +\left\lfloor\frac{3k-4 \alpha}{9} \right \rfloor
\end{align}

For $k=3$, it follows that $\left\lfloor\frac{3k-4 \alpha}{9} \right \rfloor=0$, so by the monotonicity of $h_n$ and Construction \ref{qn}, it follows 

\begin{proposition}
    If $0\leq k\leq 3$ and $n=12 a^2+20a+8+k$, then $h_n=4a^2+4a+1$.
\end{proposition}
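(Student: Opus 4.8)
The plan is to establish the claimed equality $h_n = 4a^2+4a+1 = (2a+1)^2$ for all four residues $k \in \{0,1,2,3\}$ by squeezing $h_n$ between a matching lower and upper bound. Since $h_n$ is the maximum over all $n$-ominoes, the lower bound $h_n \geq (2a+1)^2$ comes directly from Construction \ref{qn}: for $0 \leq k \leq 4$, the construction exhibits an explicit $n$-omino $A_n$ with $f_n = (2a+1)^2$ deep holes, and by \eqref{lw} we have $h_n \geq f_n = (2a+1)^2$. The upper bound is the content just derived above the statement.

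For the upper bound, I would invoke the refined estimate \eqref{bup}, rewritten in the form
\begin{align*}
    h_n \leq (2a+1)^2 + \left\lfloor \frac{24a+20+3k-4\sqrt{(6a+5)^2+3k}}{9}\right\rfloor.
\end{align*}
The key observation is that for $k$ in the small range $k \leq 4a+3$ (which certainly contains $k \in \{0,1,2,3\}$ once $a \geq 0$), the square root satisfies $(6a+5)^2 < (6a+5)^2+3k < (6a+6)^2$, so $\sqrt{(6a+5)^2+3k} = 6a+5+\alpha$ with $0 < \alpha < 1$, and the floor collapses to $\left\lfloor \frac{3k-4\alpha}{9}\right\rfloor$. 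The heart of the argument is then to check that this floor equals $0$ for each of $k=0,1,2,3$: for $k=0,1,2$ the numerator $3k-4\alpha$ is at most $6-4\alpha < 9$ and one must confirm it is nonnegative (or at least $> -9$) so the floor is $0$; the borderline case is $k=3$, which the excerpt singles out, where $3k = 9$ and one needs $0 < 4\alpha < 9$, i.e. the subtraction of $4\alpha$ pulls $\frac{9-4\alpha}{9}$ strictly below $1$ but keeps it nonnegative, forcing the floor to be exactly $0$.

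I would therefore structure the proof as: (i) quote the lower bound $h_n \geq (2a+1)^2$ from Construction \ref{qn} and \eqref{lw}; (ii) quote the upper bound for $k=3$ already computed in the excerpt, giving $h_{n} \leq (2a+1)^2$ when $k=3$; and (iii) extend this to $k=0,1,2$ by the stated \emph{monotonicity} of $h_n$ in $n$. Concretely, since enlarging $n$ cannot decrease the maximal number of deep holes, for $k \in \{0,1,2\}$ we have $h_{12a^2+20a+8+k} \leq h_{12a^2+20a+8+3} \leq (2a+1)^2$, and combined with the lower bound this pins the value exactly.

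The main obstacle I anticipate is the careful justification that $\left\lfloor \frac{3k-4\alpha}{9}\right\rfloor = 0$ at the boundary $k=3$, where a sloppy estimate of $\alpha$ could mistakenly give $1$ or $-1$; one must verify the strict inequalities $0 < \alpha < 1$ rigorously from $0 < 3k \leq 4a+3 < 12a+11 = (6a+6)^2-(6a+5)^2$. A secondary point requiring care is the appeal to monotonicity: one should confirm that this monotonicity of $h_n$ is genuinely available (it is intuitively clear since one may attach a cell to any maximizing configuration without destroying existing deep holes, and the excerpt explicitly invokes "the monotonicity of $h_n$"), so that the single computed case $k=3$ propagates downward to $k=0,1,2$ rather than requiring three separate floor computations.
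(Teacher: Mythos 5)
Your proposal is correct and matches the paper's own argument: the paper likewise rewrites the refined bound \eqref{bup} as $h_n\leq (2a+1)^2+\left\lfloor\frac{3k-4\alpha}{9}\right\rfloor$ for $k\leq 4a+3$, evaluates the floor to be $0$ at the single case $k=3$, and then invokes the monotonicity of $h_n$ together with Construction \ref{qn} (which supplies the matching lower bound $(2a+1)^2$) to settle $k=0,1,2,3$ simultaneously. Your steps (i)--(iii) are exactly this squeeze, so no further comparison is needed.
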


For $k=4$ and $a\geq 1$ we obtain $\left\lfloor\frac{3k-4 \alpha}{9} \right \rfloor=1$ so in this case $h_n\in \{4 a^2+4a+1,4 a^2+4 a+2\}$.  

Analogously, for $5\leq k \leq 6$, $\left\lfloor\frac{3k-4 \alpha}{9} \right \rfloor=1$,  so we obtain

\begin{proposition}
    If $5\leq k\leq 6$ and $n=12 a^2+20a+8+k$ then $h_n=4a^2+4a+2$.
\end{proposition}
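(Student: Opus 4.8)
The plan is to bound $h_n$ both from below and from above, and show the two bounds coincide at the value $4a^2+4a+2$ for $5\le k\le 6$. The upper bound is essentially already in hand: by the refined Corollary \eqref{bupe} together with the computation immediately preceding the previous proposition, when $5\le k\le 6\le 4a+3$ we have $\sqrt{(6a+5)^2+3k}=6a+5+\alpha$ with $0<\alpha<1$, so that
\[
h_n\le (2a+1)^2+\left\lfloor\frac{3k-4\alpha}{9}\right\rfloor.
\]
First I would verify that for $k=5$ and $k=6$ the quantity $\tfrac{3k-4\alpha}{9}$ lies in $[0,1)$: the numerator $3k-4\alpha$ ranges over $(15-4,15)=(11,15)$ for $k=5$ and over $(18-4,18)=(14,18)$ for $k=6$, and since $9\le 3k-4\alpha<18$ in both cases, the floor equals exactly $1$. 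Hence the upper bound reads $h_n\le (2a+1)^2+1=4a^2+4a+2$.

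For the lower bound I would invoke Construction \ref{qn}, which supplies an explicit $n$-omino $A_n$ with $f_n$ deep holes. In the regime $5\le k\le 7$ the construction attaches a five-cell `bridge' that encloses one additional unit-square deep hole beyond the base configuration, giving $f_n=(2a+1)^2+1=4a^2+4a+2$. Since $h_n\ge f_n$ by \eqref{lw}, this yields the matching lower bound $h_n\ge 4a^2+4a+2$. Combining the two bounds forces $h_n=4a^2+4a+2$ for $5\le k\le 6$.

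The one subtlety worth flagging explicitly is the validity of the expansion $\sqrt{(6a+5)^2+3k}=6a+5+\alpha$ with $0<\alpha<1$, which requires $0<3k<(6a+6)^2-(6a+5)^2=12a+11$; for $k\le 6$ this holds as soon as $18<12a+11$, i.e.\ $a\ge 1$, and the case $a=0$ (where $k\le 6\le 4a+3=3$ fails) must be checked separately or noted to fall outside the stated range. Thus the main obstacle is not any deep argument but rather ensuring the floor evaluates to exactly $1$ and that the hypothesis $k\le 4a+3$ underpinning the $\alpha$-expansion is consistent with $5\le k\le 6$; once these bookkeeping points are confirmed, the proposition follows immediately from the sandwich between Construction \ref{qn} and the refined upper bound \eqref{bupe}.
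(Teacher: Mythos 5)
Your proof is correct and follows essentially the same route as the paper: the upper bound comes from the refined bound \eqref{bupe} via the expansion $\sqrt{(6a+5)^2+3k}=6a+5+\alpha$, $0<\alpha<1$, which forces $\left\lfloor\frac{3k-4\alpha}{9}\right\rfloor=1$ for $k\in\{5,6\}$, and the matching lower bound comes from Construction \ref{qn}, which gives $f_n=(2a+1)^2+1$ in the range $5\leq k\leq 7$. You are in fact slightly more careful than the paper, which silently glosses over the fact that its $\alpha$-expansion hypothesis $k\leq 4a+3$ fails when $a=0$ and $k\in\{5,6\}$; as you note, that case needs (and passes) a separate direct check of the upper bound.
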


If $8\leq k\leq 4a+3$ and $k \not\equiv 1 \pmod 3$ we have that $\left\lfloor\frac{3k-4 \alpha}{9} \right \rfloor=\left\lfloor\frac{3k-6}{9} \right \rfloor$  implying

\begin{proposition} If $8\leq k\leq 4a+3$ and $k \not\equiv 1 \pmod 3$ and $n=12 a^2+20a+8+k$ then $h_n=(2a+1)^2 + \left\lfloor \frac{k-2}{3} \right\rfloor$.
    
\end{proposition}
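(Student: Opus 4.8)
The plan is to pin down $h_n$ by a squeeze: an upper bound supplied by the refined estimate \eqref{bup}, and a matching lower bound supplied by Construction \ref{qn}, with the arithmetic arranged so that both sides land on exactly $(2a+1)^2+\left\lfloor\frac{k-2}{3}\right\rfloor$.

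For the upper bound I would start from the rewritten form of \eqref{bup} valid in this range, namely $h_n\le (2a+1)^2+\left\lfloor\frac{3k-4\alpha}{9}\right\rfloor$, where the hypothesis $k\le 4a+3$ lets us write $\sqrt{(6a+5)^2+3k}=6a+5+\alpha$ with $0<\alpha<1$. The crux is the floor identity $\left\lfloor\frac{3k-4\alpha}{9}\right\rfloor=\left\lfloor\frac{3k-6}{9}\right\rfloor$, which holds precisely when $k\not\equiv 1\pmod 3$. I would verify it by residue class: if $k=3m$ then $9m-4<3k-4\alpha<9m$, so both floors equal $m-1$; if $k=3m+2$ then $9m+2<3k-4\alpha<9m+6$, so both equal $m$. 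Since $\frac{3k-6}{9}=\frac{k-2}{3}$ identically, this gives $h_n\le (2a+1)^2+\left\lfloor\frac{k-2}{3}\right\rfloor$.

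For the lower bound I would invoke the case $8\le k\le 6a+5$ of Construction \ref{qn}, which exhibits an $n$-omino $A_n$ enclosing exactly $f_n=(2a+1)^2+\left\lfloor\frac{k-2}{3}\right\rfloor$ deep holes. Because $4a+3\le 6a+5$ for every $a\ge 0$, the stated hypothesis $8\le k\le 4a+3$ lies inside the construction's range, so it applies verbatim; then \eqref{lw} yields $h_n\ge f_n=(2a+1)^2+\left\lfloor\frac{k-2}{3}\right\rfloor$. Combining the two inequalities forces equality, which is the assertion.

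The only delicate point I expect is the floor computation together with the bookkeeping of the excluded residue class: the exclusion of $k\equiv 1\pmod 3$ is genuinely necessary, since for $k=3m+1$ one has $3k-4\alpha=9m+3-4\alpha$, whose floor is $m$ when $\alpha\le\frac34$ but $m-1$ when $\alpha>\frac34$, so the upper bound would depend on the precise size of $\alpha$ and need not meet the lower bound. Everything else is a direct citation of \eqref{bup}, \eqref{lw}, and Construction \ref{qn}, so the proof is essentially a one-line squeeze once the floor identity is in hand.
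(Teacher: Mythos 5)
Your proof is correct and follows essentially the same route as the paper: rewrite the refined upper bound \eqref{bup} as $h_n\le (2a+1)^2+\left\lfloor\frac{3k-4\alpha}{9}\right\rfloor$ with $\sqrt{(6a+5)^2+3k}=6a+5+\alpha$, verify the floor identity $\left\lfloor\frac{3k-4\alpha}{9}\right\rfloor=\left\lfloor\frac{k-2}{3}\right\rfloor$ for $k\not\equiv 1\pmod 3$, and match it with the lower bound $f_n=(2a+1)^2+\left\lfloor\frac{k-2}{3}\right\rfloor$ from the case $8\le k\le 6a+5$ of Construction \ref{qn} via \eqref{lw}. Your residue-class verification and the remark on why $k\equiv 1\pmod 3$ must be excluded make explicit what the paper leaves implicit, but the argument is the same squeeze.
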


If $4a+4\leq k\leq 8a+7$ then $\sqrt{(6a+5)^2+3k}=6a+6+\alpha$ where $0<\alpha<1$, so in this case $$h_n\leq (2a+1)^2+\left\lfloor \frac{3k-4-4\alpha}{9} \right\rfloor.$$

For $4a+4\leq k\leq 6a+5$ and $k\not \equiv 2 \pmod 3$ we have $\left\lfloor \frac{3k-4-4\alpha}{9} \right\rfloor=\left\lfloor \frac{3k-6}{9} \right\rfloor$. Thus,

\begin{proposition}
    For $4a+4\leq k\leq 6a+5$, $k\not \equiv 2 \pmod 3$ and $n=12 a^2+20a+8+k$ then  $$h_n=(2a+1)^2 + \left\lfloor \frac{k-2}{3} \right\rfloor$$
\end{proposition}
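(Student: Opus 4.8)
The plan is to combine the refined upper bound from Corollary with the matching lower bound from Construction 3.1, exactly as in the preceding propositions. The target identity is
\[
h_n=(2a+1)^2+\left\lfloor\frac{k-2}{3}\right\rfloor
\]
for $4a+4\le k\le 6a+5$ with $k\not\equiv 2\pmod 3$. Since the statement explicitly tells me that in this regime $\sqrt{(6a+5)^2+3k}=6a+6+\alpha$ with $0<\alpha<1$ and that the floor in the upper bound simplifies to $\left\lfloor\frac{3k-6}{9}\right\rfloor$, the upper bound collapses to exactly $(2a+1)^2+\left\lfloor\frac{k-2}{3}\right\rfloor$. So the whole content of the proof is: \emph{verify the arithmetic claim $\left\lfloor\frac{3k-4-4\alpha}{9}\right\rfloor=\left\lfloor\frac{3k-6}{9}\right\rfloor$ under the stated congruence condition}, and then \emph{match it with the lower bound}.

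First I would pin down $\alpha$. In this range, $(6a+6)^2=(6a+5)^2+12a+11$, so $(6a+5)^2+3k$ lies strictly between $(6a+5)^2$ and $(6a+6)^2$ precisely when $0<3k<12a+11$, i.e. $0<k<4a+4$ for the lower gap and $4a+4\le k$ pushes us to the next integer; the hypothesis $4a+4\le k\le 6a+5$ together with $12a+11$ being the square gap confirms $\lfloor\sqrt{(6a+5)^2+3k}\rfloor=6a+5$ fails and the value sits in $(6a+5,6a+6)$ only up to $k\le 4a+3$, so for $k\ge 4a+4$ we indeed land in $(6a+6,6a+7)$, giving the claimed form $6a+6+\alpha$ with $0<\alpha<1$. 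Substituting into the general bound $(2a+1)^2+\left\lfloor\frac{24a+20+3k-4(6a+6)-4\alpha}{9}\right\rfloor=(2a+1)^2+\left\lfloor\frac{3k-4-4\alpha}{9}\right\rfloor$ reproduces the displayed inequality.

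Next I would carry out the congruence bookkeeping. Since $0<4\alpha<4$, the numerator $3k-4-4\alpha$ lies in the open interval $(3k-8,3k-4)$. The key observation is that $\left\lfloor\frac{3k-4-4\alpha}{9}\right\rfloor$ equals $\left\lfloor\frac{3k-6}{9}\right\rfloor$ unless the perturbation pushes the value across a multiple of $9$; this crossing can only happen when $3k-6\equiv 0,1,2\pmod 9$ in a way that the shift matters, which corresponds exactly to $k\equiv 2\pmod 3$. Excluding that residue class (as the hypothesis does) guarantees the two floors agree, so the upper bound becomes $h_n\le (2a+1)^2+\left\lfloor\frac{3k-6}{9}\right\rfloor=(2a+1)^2+\left\lfloor\frac{k-2}{3}\right\rfloor$, using that $\left\lfloor\frac{3k-6}{9}\right\rfloor=\left\lfloor\frac{k-2}{3}\right\rfloor$.

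Finally I would close the gap from below. Construction 3.1, in the branch $8\le k\le 6a+5$, produces an $n$-omino with $f_n=(2a+1)^2+\left\lfloor\frac{k-2}{3}\right\rfloor$ deep holes, and by \eqref{lw} we have $h_n\ge f_n$. For the sub-range $4a+4\le k\le 7$ (which only arises when $a$ is small) monotonicity of $h_n$ in $n$ together with the adjacent propositions fills in any remaining values. Combining $h_n\ge (2a+1)^2+\left\lfloor\frac{k-2}{3}\right\rfloor$ with the matching upper bound forces equality. The main obstacle is the second step: justifying cleanly that the $-4\alpha$ correction never changes the floor when $k\not\equiv 2\pmod 3$, since one must rule out the boundary cases where $3k-4-4\alpha$ straddles a multiple of $9$; handling the three residues of $k$ modulo $3$ separately and tracking the induced residue of $3k-6$ modulo $9$ is the delicate part, whereas the substitution and the lower-bound matching are routine.
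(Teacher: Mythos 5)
Your proposal is correct and follows essentially the same route as the paper: substitute $\sqrt{(6a+5)^2+3k}=6a+6+\alpha$ (valid here since $12a+12\leq 3k\leq 18a+15<24a+24$) into the refined upper bound, check via the residues of $k$ modulo $3$ that $\left\lfloor\frac{3k-4-4\alpha}{9}\right\rfloor=\left\lfloor\frac{3k-6}{9}\right\rfloor=\left\lfloor\frac{k-2}{3}\right\rfloor$ whenever $k\not\equiv 2\pmod 3$, and match this with the lower bound $f_n=(2a+1)^2+\left\lfloor\frac{k-2}{3}\right\rfloor$ from Construction \ref{qn}. Your residue-by-residue verification of the floor identity is in fact more explicit than the paper, which simply asserts it.
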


Applying a similar analysis, we derive the following results.

\begin{proposition}
    If $6a+6\leq k\leq 6a+7$  and $n=12 a^2+20a+8+k$ then $h_n=4a^2+6a+2$.
\end{proposition}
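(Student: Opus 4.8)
The plan is to trap $h_n$ between the lower bound supplied by Construction \ref{qn} and the refined upper bound \eqref{bup}, and to verify that the two values coincide. For the lower bound I would invoke the fifth case of Construction \ref{qn}, which applies exactly when $6a+6\leq k\leq 6a+9$ and produces $f_n=(2a+1)^2+2a+1$. Since both $k=6a+6$ and $k=6a+7$ lie in this range, inequality \eqref{lw} immediately gives
\[
h_n\geq f_n=(2a+1)^2+2a+1=4a^2+6a+2.
\]

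For the matching upper bound I would start from the form of \eqref{bup} already derived in the paper,
\[
h_n\leq (2a+1)^2+\left\lfloor\frac{24a+20+3k-4\sqrt{(6a+5)^2+3k}}{9}\right\rfloor.
\]
Both values $k=6a+6$ and $k=6a+7$ satisfy $4a+4\leq k\leq 8a+7$ for every $a\geq 0$, so I may write $\sqrt{(6a+5)^2+3k}=6a+6+\alpha$ with $0<\alpha<1$. Substituting and simplifying collapses the numerator to $3k-4-4\alpha$, reducing the bound to $h_n\leq (2a+1)^2+\left\lfloor\frac{3k-4-4\alpha}{9}\right\rfloor$.

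It then remains only to evaluate the floor term in each case. For $k=6a+6$ the argument of the floor is $2a+\frac{14-4\alpha}{9}$, and for $k=6a+7$ it is $2a+\frac{17-4\alpha}{9}$. Using $0<\alpha<1$, the fractional contribution $\frac{14-4\alpha}{9}$ lies in $\left(\frac{10}{9},\frac{14}{9}\right)$ and $\frac{17-4\alpha}{9}$ lies in $\left(\frac{13}{9},\frac{17}{9}\right)$; in both cases this is strictly between $1$ and $2$, so each floor equals exactly $2a+1$. Hence $h_n\leq (2a+1)^2+2a+1=4a^2+6a+2$, which agrees with the lower bound and forces equality.

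The only delicate step is localising $\alpha$: I must confirm that for these two values of $k$ the quantity $(6a+5)^2+3k$ sits strictly between the consecutive squares $(6a+6)^2$ and $(6a+7)^2$ uniformly in $a$, so that $0<\alpha<1$ holds and the resulting fractional terms never reach an integer. This follows from comparing $3k$ with $12a+11=(6a+6)^2-(6a+5)^2$ and with $24a+24=(6a+7)^2-(6a+5)^2$, and it is the single point where the specific range $6a+6\leq k\leq 6a+7$ is genuinely used; the remaining manipulations are routine.
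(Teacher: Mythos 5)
Your proof is correct and follows exactly the route the paper intends (the paper dispatches this proposition with ``applying a similar analysis''): the lower bound $f_n=(2a+1)^2+2a+1$ from the case $6a+6\leq k\leq 6a+9$ of Construction \ref{qn}, matched against the refined upper bound \eqref{bup} written as $(2a+1)^2+\left\lfloor\frac{3k-4-4\alpha}{9}\right\rfloor$ with $\sqrt{(6a+5)^2+3k}=6a+6+\alpha$, $0<\alpha<1$. Your floor evaluations and the verification that $k=6a+6,6a+7$ fall in the range $4a+4\leq k\leq 8a+7$ are accurate, so the two bounds coincide at $4a^2+6a+2$ as claimed.
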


The monotonicity of the sequence $h_n$, together with Construction \ref{qn}, now yields a stronger result.

\begin{theorem} If $4a+4\leq k\leq 6a+7$ and $n=12 a^2+20a+8+k$ then $$h_n=(2a+1)^2 + \left\lfloor \frac{k-2}{3} \right\rfloor.$$
    
\end{theorem}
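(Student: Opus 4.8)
The plan is to recognize that the single formula $(2a+1)^2+\lfloor\frac{k-2}{3}\rfloor$ already matches the conclusions of the propositions proved immediately above on almost all of the interval $4a+4\le k\le 6a+7$, so that the theorem amounts to closing one residue class by interpolation. Concretely, I would partition the range into three families: (i) $4a+4\le k\le 6a+5$ with $k\not\equiv 2\pmod 3$, which is exactly the preceding proposition; (ii) $6a+6\le k\le 6a+7$, which is the proposition giving $h_n=4a^2+6a+2$, after the one-line check that $\lfloor\frac{6a+4}{3}\rfloor=\lfloor\frac{6a+5}{3}\rfloor=2a+1$ so that $4a^2+6a+2=(2a+1)^2+\lfloor\frac{k-2}{3}\rfloor$ on this subinterval; and (iii) the remaining class $k\equiv 2\pmod 3$ with $4a+4\le k\le 6a+5$, which is the only genuinely new case.

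For family (iii) the plan is a sandwich. Writing $k=3m+2$, the target is $(2a+1)^2+m$. The lower bound is free: Construction~\ref{qn} together with \eqref{lw} gives $h_n\ge f_n=(2a+1)^2+\lfloor\frac{k-2}{3}\rfloor=(2a+1)^2+m$. For the reverse inequality I would use monotonicity of $(h_n)$ and read off the value at the right neighbour. The arithmetic that makes this work is that for $k\equiv 2\pmod3$ one has $\lfloor\frac{(k+1)-2}{3}\rfloor=\lfloor\frac{k-2}{3}\rfloor=m$; hence the index corresponding to $k+1$ falls into family (i) when $k<6a+5$ and into family (ii) when $k=6a+5$ (since then $k+1=6a+6$), and in both situations the already-established exact value there is precisely $(2a+1)^2+m$. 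Monotonicity $h_n\le h_{n+1}$ then forces $h_n\le(2a+1)^2+m$, matching the lower bound and giving equality.

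The crux, and the only step that is not bookkeeping, is justifying $h_n\le h_{n+1}$. I would argue this geometrically: given an $n$-omino $P$ attaining $h_n$ deep holes, every hole boundary lies strictly in the interior of $P$ and is disjoint from the outer boundary, so I can glue one extra cell along an outer boundary edge on the outward side of $P$. The new $(n+1)$-omino still encloses all $h_n$ holes, and each stays deep because its boundary is unchanged while the outer boundary has only grown away from the holes; thus $h_{n+1}\ge h_n$. It is important here that the right-neighbour values invoked in family (iii) are genuine equalities (from the cited propositions), not mere upper bounds, so the interpolation is not circular.

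As an alternative that avoids monotonicity, I would note that $k\le 6a+5$ forces $\sqrt{(6a+5)^2+3k}<6a+6.5$, so the quantity $\alpha$ in the refined estimate derived from \eqref{bup} satisfies $0<\alpha<\tfrac12$; then $\lfloor\frac{3k-4-4\alpha}{9}\rfloor=m$ directly, recovering the upper bound with no appeal to monotonicity at all. I would present the monotonicity route as the main argument, in keeping with the surrounding development, and mention this sharper floor estimate as a self-contained verification.
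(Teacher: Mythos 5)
Your proposal is correct and takes essentially the same route as the paper: the paper derives this theorem exactly as you do, by combining the two preceding propositions (the cases $4a+4\le k\le 6a+5$ with $k\not\equiv 2\pmod 3$, and $6a+6\le k\le 6a+7$), the construction lower bound \eqref{lw}, and the monotonicity of $h_n$ to close the remaining residue class $k\equiv 2\pmod 3$. Your explicit gluing argument for $h_n\le h_{n+1}$ and the alternative $0<\alpha<\tfrac{1}{2}$ estimate are sound refinements of steps the paper leaves implicit.
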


If $6a+13\leq k \leq 8a +7$, the comparison of the lower and the upper bounds reduces to checking the equality  $\left\lfloor \frac{3k-4-4\alpha}{9} \right\rfloor=\left\lfloor \frac{3k-12}{9} \right\rfloor$.  It is impossible that $\alpha \leq{1}{2}$ since in this case it would be $(6a +6+\alpha)^2\leq 36 a^2+78 a 43$ and $(6a+5)^2+3k \geq 36 a^2+7a+46$. But this is impossible. Now we can directly check that the equality holds for $k\not\equiv 0\pmod 3$. Therefore,

\begin{theorem}If $6a+13\leq k \leq 8a +7$ and $k\not\equiv 0\pmod 3$ then $$h_n=(2a+1)^2+2 +\left\lfloor \frac{k-10}{3} \right\rfloor$$
    \end{theorem}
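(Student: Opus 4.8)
The plan is to prove the identity $h_n=(2a+1)^2+2+\left\lfloor\frac{k-10}{3}\right\rfloor$ for $6a+13\le k\le 8a+7$ with $k\not\equiv 0\pmod 3$ by squeezing $h_n$ between a lower bound coming from Construction \ref{qn} and the refined upper bound \eqref{bup}, exactly as in the preceding theorems. First I would observe that in this range the relevant construction is the one in the regime $6a+13\le k\le 12a+13$, for which the displayed value is $f_n=(2a+1)^2+2+\left\lfloor\frac{k-10}{3}\right\rfloor$; combined with \eqref{lw}, $h_n\ge f_n$, this gives the lower bound matching the claimed formula. So the entire content is to show the upper bound cannot exceed this value.

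For the upper bound I would start from the inequality already derived in this section,
\[
h_n\le (2a+1)^2+\left\lfloor\frac{24a+20+3k-4\sqrt{(6a+5)^2+3k}}{9}\right\rfloor,
\]
and I would locate the integer part of $\sqrt{(6a+5)^2+3k}$ in this $k$-range. Since $6a+13\le k\le 8a+7$ lies strictly between $8a+8$ and $12a+13$, the radicand $(6a+5)^2+3k$ sits between $(6a+6)^2$ and $(6a+7)^2$, so I would write $\sqrt{(6a+5)^2+3k}=6a+6+\alpha$ with $0<\alpha<1$, reducing the floor to $\left\lfloor\frac{3k-4-4\alpha}{9}\right\rfloor$. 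The crux is then to show this equals $\left\lfloor\frac{3k-12}{9}\right\rfloor$ — which is precisely $2+\left\lfloor\frac{k-10}{3}\right\rfloor$ after pulling the $18/9=2$ out — under the hypothesis $k\not\equiv 0\pmod 3$.

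The main obstacle, and the step the excerpt flags, is pinning down $\alpha$ tightly enough. I would establish $\alpha>\frac12$ by the argument sketched just before the statement: if $\alpha\le\frac12$ then $(6a+6+\alpha)^2\le 36a^2+78a+43.25$ while the radicand $(6a+5)^2+3k\ge 36a^2+60a+25+3(6a+13)=36a^2+78a+64$, a contradiction; hence $\frac12<\alpha<1$, so $2<4\alpha<4$ and $\frac{3k-8}{9}>\frac{3k-4-4\alpha}{9}>\frac{3k-12}{9}$. It then remains to check that for $k\not\equiv 0\pmod 3$ there is no integer strictly between $\frac{3k-12}{9}$ and $\frac{3k-8}{9}$ other than possibly $\frac{3k-12}{9}=\frac{k-4}{3}$ itself (which is an integer exactly when $k\equiv 1\pmod 3$, in which case it is the floor) — a short residue-by-residue verification showing $\left\lfloor\frac{3k-4-4\alpha}{9}\right\rfloor=\left\lfloor\frac{3k-12}{9}\right\rfloor$ precisely when $3\nmid k$. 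Combining this computed upper bound with the lower bound $h_n\ge f_n$ closes the gap and gives equality, completing the proof. I would also note that the monotonicity of $h_n$ need not be invoked here since the squeeze is exact, though it offers an alternative route for the borderline residue $k\equiv 1$.
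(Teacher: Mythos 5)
Your proof follows the paper's argument step for step --- the same lower bound from Construction \ref{qn}, the same rewriting of the upper bound \eqref{bup} as $h_n\le (2a+1)^2+\left\lfloor\frac{3k-4-4\alpha}{9}\right\rfloor$ with $\sqrt{(6a+5)^2+3k}=6a+6+\alpha$, the same contradiction argument for $\alpha>\frac{1}{2}$, and a final residue check mod $3$ --- but it contains a concrete error in the interval localization. From $\frac{1}{2}<\alpha<1$ one gets
\begin{equation*}
\frac{3k-8}{9}\;<\;\frac{3k-4-4\alpha}{9}\;<\;\frac{3k-6}{9},
\end{equation*}
whereas you claim $\frac{3k-12}{9}<\frac{3k-4-4\alpha}{9}<\frac{3k-8}{9}$. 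Your upper estimate $\frac{3k-4-4\alpha}{9}<\frac{3k-8}{9}$ is equivalent to $\alpha>1$, which contradicts $\alpha<1$; so the quantity whose floor you must compute does not lie in the interval $\left(\frac{3k-12}{9},\frac{3k-8}{9}\right)$ on which you run your residue-by-residue verification. As written, that key step fails.

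The theorem survives because the same check on the correct interval gives the same answer. Write $k=3m+r$. For $r=1$ the quantity equals $m-\frac{1+4\alpha}{9}$ and lies in $\left(m-\frac{5}{9},\,m-\frac{1}{3}\right)$; for $r=2$ it equals $m+\frac{2-4\alpha}{9}$ and lies in $\left(m-\frac{2}{9},\,m\right)$; in both cases the floor is $m-1=\left\lfloor\frac{3k-12}{9}\right\rfloor$, as required. For $r=0$ it lies in $\left(m-\frac{8}{9},\,m-\frac{2}{3}\right)$, with floor $m-1$, strictly larger than $\left\lfloor\frac{3k-12}{9}\right\rfloor=m-2$, which is exactly why $k\equiv 0\pmod 3$ must be excluded. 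Note that the bound $\alpha>\frac{1}{2}$ is needed precisely in the case $r=2$, to keep the quantity strictly below $m$; in your version it is instead spent on the unobtainable bound $\frac{3k-8}{9}$. Two further slips are harmless: $\left(6a+6+\frac{1}{2}\right)^2=36a^2+78a+42.25$ (not $43.25$), though the contradiction with the radicand being at least $36a^2+78a+64$ holds either way; and your justification that the radicand lies between $(6a+6)^2$ and $(6a+7)^2$ (``$6a+13\le k\le 8a+7$ lies strictly between $8a+8$ and $12a+13$'') is garbled --- what is actually needed is $4a+4\le k\le 8a+7$. With the inequality directions corrected and the check rerun on $\left(\frac{3k-8}{9},\frac{3k-6}{9}\right)$, your argument closes exactly as the paper's does.
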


Putting $k=6a+10$ we find $\left\lfloor \frac{3k-4-4\alpha}{9} \right\rfloor=2a+2$ so
\begin{proposition}
$h_n=4a^2+6a+3$ for $n=12 a^2+26a+18$.
\end{proposition}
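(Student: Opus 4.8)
The plan is to compute the specific value $\left\lfloor \frac{3k-4-4\alpha}{9} \right\rfloor$ at the particular point $k=6a+10$, which lies in the range $4a+4\leq k\leq 8a+7$ treated just above (assuming $a\geq 3$ so that $6a+10\leq 8a+7$), and then combine the resulting upper bound with the matching lower bound from Construction \ref{qn}. Since we are in the regime where $\sqrt{(6a+5)^2+3k}=6a+6+\alpha$ with $0<\alpha<1$, the first step is to pin down $\alpha$ precisely enough to evaluate the floor. Substituting $k=6a+10$ gives $(6a+5)^2+3k=(6a+5)^2+18a+30=36a^2+60a+25+18a+30=36a^2+78a+55$, while $(6a+6)^2=36a^2+72a+36$. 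I would verify $0<\alpha<1$ by checking $(6a+6)^2<36a^2+78a+55<(6a+7)^2=36a^2+84a+49$; the left inequality is $72a+36<78a+55$, i.e. $6a>-19$, always true, and the right is $78a+55<84a+49$, i.e. $6a>6$, so $a\geq 2$ suffices here.

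Next I would evaluate the floor itself. With $k=6a+10$ we have $3k-4=18a+26$, so $\frac{3k-4-4\alpha}{9}=\frac{18a+26-4\alpha}{9}=2a+\frac{26-4\alpha}{9}$. Since $0<\alpha<1$, the fractional part $\frac{26-4\alpha}{9}$ lies strictly between $\frac{22}{9}\approx 2.44$ and $\frac{26}{9}\approx 2.89$, so $2<\frac{26-4\alpha}{9}<3$, giving $\left\lfloor 2a+\frac{26-4\alpha}{9}\right\rfloor=2a+2$. This confirms the claimed value $\left\lfloor \frac{3k-4-4\alpha}{9} \right\rfloor=2a+2$, so the upper bound from \eqref{bup} reads $h_n\leq (2a+1)^2+2a+2=4a^2+6a+3$ for $n=12a^2+20a+8+(6a+10)=12a^2+26a+18$.

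Finally I would supply the matching lower bound. The value $n=12a^2+26a+18$ corresponds to $k=6a+10$, which falls in the case $6a+10\leq k\leq 6a+12$ of Construction \ref{qn}, where $f_n=(2a+1)^2+2a+2=4a^2+6a+3$. By inequality \eqref{lw}, $h_n\geq f_n=4a^2+6a+3$, which together with the upper bound forces equality $h_n=4a^2+6a+3$.

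The main obstacle is purely arithmetic bookkeeping: correctly locating $k=6a+10$ within the right subcase (and confirming the range restriction on $a$ under which $6a+10\leq 8a+7$, namely $a\geq\tfrac{3}{2}$, so $a\geq 2$), and carefully bounding $\alpha$ so that the floor evaluation is unambiguous. There is no conceptual difficulty beyond what the preceding propositions already establish; the result is essentially a boundary instance of the general formula pinned down by matching the upper bound of \eqref{bup} against the explicit construction.
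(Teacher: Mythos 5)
Your proposal is correct and follows essentially the same route as the paper: evaluate $\left\lfloor \frac{3k-4-4\alpha}{9}\right\rfloor=2a+2$ at $k=6a+10$ and match the resulting upper bound $(2a+1)^2+2a+2$ from \eqref{bup} against the lower bound $f_n=(2a+1)^2+2a+2$ supplied by Construction \ref{qn}. You are in fact more careful than the paper, which never notes that $k=6a+10$ lies in the window $4a+4\leq k\leq 8a+7$ (where $0<\alpha<1$ is guaranteed) only when $a\geq 2$; for the remaining cases $a\in\{0,1\}$, i.e. $n=18$ and $n=56$, the claimed value still holds, but one must check \eqref{bupe} directly (there $3n+1=55$ and $3n+1=169$ give $h_{18}\leq 3$ and $h_{56}\leq 13$, matching the construction), a step missing from both your argument and the paper's.
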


\begin{theorem} For $9a+10\leq k\leq 12a +12$ and $n=12a^2+20a+8+k$ it holds $h_n=(2a+1)^2+\left\lfloor \frac{k-4}{3}\right\rfloor$.
    
\end{theorem}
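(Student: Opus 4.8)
The plan is to sandwich $h_n$ between the upper bound from Proposition~\ref{bup} (in its floored form, Corollary~\ref{bupe}) and the lower bound $f_n$ coming from Construction~\ref{qn}, and to verify that these two coincide precisely when $9a+10\leq k\leq 12a+12$. First I would compute the upper bound in the rewritten form already derived in the excerpt, namely
\begin{align*}
    h_n \leq (2a+1)^2+\left\lfloor \frac{24a+20+3k-4\sqrt{(6a+5)^2+3k}}{9}\right\rfloor,
\end{align*}
and locate the integer part of $\sqrt{(6a+5)^2+3k}$ for $k$ in this range. The key preliminary step is to show that $\sqrt{(6a+5)^2+3k}=6a+7+\alpha$ with $0\le\alpha<1$ for $k$ in the stated window; this amounts to checking $(6a+7)^2\le (6a+5)^2+3k<(6a+8)^2$, i.e.\ $\tfrac{(6a+7)^2-(6a+5)^2}{3}\le k<\tfrac{(6a+8)^2-(6a+5)^2}{3}$, which after simplification gives $8a+8\le k< 13a+13$. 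Since $9a+10\le k\le 12a+12$ lies strictly inside this interval (for $a\ge 0$), the integer part is $6a+7$ throughout, and the floor reduces to $\left\lfloor\frac{3k-8-4\alpha}{9}\right\rfloor$.

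Next I would replace the $\alpha$-dependent floor by a clean expression. Because $0<\alpha<1$ the quantity $4\alpha$ lies strictly between $0$ and $4$, so I would argue that $\left\lfloor\frac{3k-8-4\alpha}{9}\right\rfloor=\left\lfloor\frac{3k-12}{9}\right\rfloor=\left\lfloor\frac{k-4}{3}\right\rfloor$ for every admissible $k$, possibly after handling the residue of $k$ modulo $3$ separately (as was done in the earlier propositions). This yields the upper bound
\begin{align*}
    h_n\leq (2a+1)^2+\left\lfloor \frac{k-4}{3}\right\rfloor.
\end{align*}
The matching lower bound $h_n\geq f_n$ should follow from Construction~\ref{qn}: for $k$ in the subranges covered there (the cases $6a+13\le k\le 12a+13$ and the subsequent bridge configurations), $f_n=(2a+1)^2+2+\left\lfloor\frac{k-10}{3}\right\rfloor$, and I would verify the algebraic identity $2+\left\lfloor\frac{k-10}{3}\right\rfloor=\left\lfloor\frac{k-4}{3}\right\rfloor$, which holds because $\left\lfloor\frac{k-10}{3}\right\rfloor=\left\lfloor\frac{k-4}{3}\right\rfloor-2$. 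Where Construction~\ref{qn} does not directly supply the value at a given $k$, I would invoke the monotonicity of $h_n$ (already used in the preceding theorems) to interpolate between the construction values.

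The main obstacle I anticipate is the boundary bookkeeping around the residue class of $k$ modulo $3$ and the exact transition points of the integer part of the square root. The floor $\left\lfloor\frac{3k-8-4\alpha}{9}\right\rfloor$ is sensitive to whether $3k-8$ is just below a multiple of $9$, and for certain residues the $-4\alpha$ term could in principle push the floor down by one; I would need to confirm that this does not happen anywhere in $9a+10\le k\le 12a+12$, which is exactly why the theorem's range starts at $9a+10$ rather than at $8a+8$. Pinning down that the lower endpoint $9a+10$ is the first $k$ for which the clean formula $\left\lfloor\frac{k-4}{3}\right\rfloor$ is simultaneously achieved by both bounds, and that no off-by-one discrepancy survives at either end, is the delicate part; the rest is routine floor arithmetic combined with the already-established monotonicity and Construction~\ref{qn}.
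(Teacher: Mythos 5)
Your overall strategy coincides with the paper's implicit one: sandwich $h_n$ between the Construction~\ref{qn} value $f_n$ and the Pick-based bound \eqref{bup}, rewrite the latter via the integer part of $\sqrt{(6a+5)^2+3k}$, and match floors. Your reduction of the upper bound to $(2a+1)^2+\bigl\lfloor\tfrac{3k-8-4\alpha}{9}\bigr\rfloor$ is correct (modulo an arithmetic slip: the window for integer part $6a+7$ is $8a+8\le k<12a+13$, not $k<13a+13$; harmless, since the theorem's range lies inside both). The genuine gap is in the step you only gesture at, and your diagnosis of the hazard runs in the wrong direction. Since $3k-8-4\alpha>3k-12$ for all $\alpha<1$, the upper-bound floor can never fall \emph{below} $\bigl\lfloor\tfrac{k-4}{3}\bigr\rfloor$; the danger is that it stays one unit too \emph{high}, leaving daylight between the two bounds. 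Writing $k=3m+r$: for $r=1,2$ the equality $\bigl\lfloor\tfrac{3k-8-4\alpha}{9}\bigr\rfloor=\bigl\lfloor\tfrac{k-4}{3}\bigr\rfloor$ holds for every $\alpha\in[0,1)$, but for $r=0$ one gets $m-1$ when $\alpha\le\tfrac14$ and the desired $m-2$ only when $\alpha>\tfrac14$. So for $k\equiv 0\pmod 3$ the proof \emph{requires} the push-down to happen, whereas you propose to ``confirm that this does not happen.'' The missing quantitative step is the estimate $\alpha>\tfrac14$, i.e.\ $(6a+5)^2+3k>\left(6a+\tfrac{29}{4}\right)^2$, i.e.\ $3k>27a+\tfrac{441}{16}$, which for integers (with $3k-27a$ divisible by $3$) is exactly $k\ge 9a+10$. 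That computation is the entire content of the theorem's lower endpoint: it is why this theorem needs no congruence restriction while the companion proposition for $8a+8\le k\le 9a+8$ must exclude $k\equiv 0\pmod 3$. Without it, your argument does not close for a third of the values of $k$ in the range.

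A smaller defect concerns the lower bound. You quote the construction case $6a+13\le k\le 12a+13$, where $f_n=(2a+1)^2+2+\bigl\lfloor\tfrac{k-10}{3}\bigr\rfloor$, and your identity $2+\bigl\lfloor\tfrac{k-10}{3}\bigr\rfloor=\bigl\lfloor\tfrac{k-4}{3}\bigr\rfloor$ is fine; this covers $a\ge 1$, since then $9a+10\ge 6a+13$. But for $a=0$ the theorem's range is $10\le k\le 12$, which lies entirely below $6a+13=13$; there the matching value $f_n=(2a+1)^2+2a+2=(2a+1)^2+\bigl\lfloor\tfrac{k-4}{3}\bigr\rfloor$ comes from the \emph{preceding} bridge case $6a+10\le k\le 6a+12$ of Construction~\ref{qn}, not from the case you cite nor from the ``subsequent'' configurations, and monotonicity alone cannot substitute for citing it since you need a lower bound at these very $k$. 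With the $\alpha>\tfrac14$ estimate supplied and the $a=0$ range routed through the correct construction case, your argument closes and agrees with the paper's.
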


\begin{proposition}If $8a+8\leq k \leq 9a+8$ and $k\not\equiv 0 \pmod 3$ then $h_n=(2a+1)^2+\left\lfloor \frac{k-4}{3}\right\rfloor$
    
\end{proposition}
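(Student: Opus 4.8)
The plan is to sandwich $h_n$ between the lower bound furnished by Construction \ref{qn} and the upper bound \eqref{bup}, exactly as in the preceding propositions, and to show that on the range $8a+8\le k\le 9a+8$ the two bounds coincide precisely when $k\not\equiv 0\pmod 3$.

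First I would treat the upper bound, where the essential step is to locate $\sqrt{(6a+5)^2+3k}$ between consecutive integers. Observe that at $k=8a+8$ one has $(6a+5)^2+3k=36a^2+84a+49=(6a+7)^2$, while at the top of the range $k=9a+8$ one gets $36a^2+87a+49$, which is still strictly below $(6a+8)^2=36a^2+96a+64$. Hence throughout the range I may write $\sqrt{(6a+5)^2+3k}=6a+7+\alpha$ with $0\le\alpha<1$. Substituting this into the upper bound derived from \eqref{bup} collapses the floor term to $\big\lfloor\frac{3k-8-4\alpha}{9}\big\rfloor$, so that $h_n\le (2a+1)^2+\big\lfloor\frac{3k-8-4\alpha}{9}\big\rfloor$.

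Next I would compare this with the target value by a residue analysis of $k$ modulo $3$. Writing $3k=9m+3r$ with $r\in\{0,1,2\}$, a direct check using $0\le\alpha<1$ shows that for $r=1$ and $r=2$ the quantity $\frac{3k-8-4\alpha}{9}$ lies in the same unit interval as $\frac{3k-12}{9}$, whence $\big\lfloor\frac{3k-8-4\alpha}{9}\big\rfloor=\big\lfloor\frac{k-4}{3}\big\rfloor$. For $r=0$ this fails: the floor can be one larger (when $\alpha<\tfrac14$), which is exactly why the hypothesis $k\not\equiv 0\pmod 3$ is imposed. This yields $h_n\le (2a+1)^2+\big\lfloor\frac{k-4}{3}\big\rfloor$.

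For the matching lower bound I would invoke Construction \ref{qn}. For $a\ge 3$ the whole interval $8a+8\le k\le 9a+8$ lies inside $6a+13\le k\le 12a+13$, where $f_n=(2a+1)^2+2+\big\lfloor\frac{k-10}{3}\big\rfloor=(2a+1)^2+\big\lfloor\frac{k-4}{3}\big\rfloor$ (pulling the integer $2$ inside the floor), so $h_n\ge f_n$ supplies the reverse inequality. The finitely many small cases $a\in\{0,1,2\}$, where the interval meets the construction cases $6a+6\le k\le 6a+9$ and $6a+10\le k\le 6a+12$ instead, I would verify by hand, using the monotonicity of $h_n$ to cover the boundaries; in each such case $f_n$ again equals $(2a+1)^2+\big\lfloor\frac{k-4}{3}\big\rfloor$ at the admissible $k$. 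Combining the two bounds gives the claimed equality. The main obstacle is the purely arithmetic bookkeeping: keeping the estimate $0\le\alpha<1$ sharp at both endpoints of the range, carrying out the modulo-$3$ case split cleanly, and confirming that the construction case actually containing $k$ produces the value $(2a+1)^2+\big\lfloor\frac{k-4}{3}\big\rfloor$ for every small $a$.
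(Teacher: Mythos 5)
Your proposal is correct and follows essentially the same route as the paper: writing $\sqrt{(6a+5)^2+3k}=6a+7+\alpha$ with $0\leq\alpha<1$ on this range, reducing the upper bound \eqref{bup} to $(2a+1)^2+\bigl\lfloor\frac{3k-8-4\alpha}{9}\bigr\rfloor$, matching it against the construction's lower bound $(2a+1)^2+2+\bigl\lfloor\frac{k-10}{3}\bigr\rfloor=(2a+1)^2+\bigl\lfloor\frac{k-4}{3}\bigr\rfloor$ via the mod-$3$ case split, which is exactly the ``similar analysis'' the paper leaves implicit. Your explicit verification of the small cases $a\in\{0,1,2\}$, where the range $8a+8\leq k\leq 9a+8$ spills out of the construction case $6a+13\leq k\leq 12a+13$, is in fact more careful than the paper's own (omitted) argument.
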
 

For $12 a+13\leq k\leq 16a +18$ the value of $\sqrt{(6a+5)^2+3k}=6a+8+\alpha$ where $0\leq \alpha<1$. It holds that $$h_n\leq (2a+1)^2+\left\lfloor \frac{3k-12-4\alpha}{3}\right\rfloor$$

For $k=12a+16$, we obtain $h_n\leq (2a+1)^2+4a+3$ so we can deduce that

\begin{proposition} For $12a+13\leq k\leq 12a+16$ $h_n=(2a+1)^2+4a+3$
    
\end{proposition}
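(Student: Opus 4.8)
The plan is to pin $h_n$ between matching bounds over the whole block $12a+13\le k\le 12a+16$, following the pattern of the earlier propositions in this section. For the upper bound I would take as given the estimate established immediately above, that at $k=12a+16$ the sharp bound \eqref{bup} gives $h_n\le (2a+1)^2+4a+3$. Since $n=12a^2+20a+8+k$ is strictly increasing in $k$ and, by the monotonicity of the sequence $h_n$, the map $n\mapsto h_n$ is non-decreasing, every $k$ in the block corresponds to a value of $n$ no larger than the one at $k=12a+16$. Hence $h_n\le (2a+1)^2+4a+3$ throughout $12a+13\le k\le 12a+16$, with no further computation required.

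For the lower bound I would invoke Construction \ref{qn} together with \eqref{lw}. On this block the construction is covered by two of its cases: the case $6a+13\le k\le 12a+13$, whose value $f_n=(2a+1)^2+2+\lfloor (k-10)/3\rfloor$ equals $(2a+1)^2+4a+3$ at $k=12a+13$, and the case $12a+14\le k\le 12a+17$, whose value is $f_n=(2a+1)^2+4a+3$ outright. Thus $f_n=(2a+1)^2+4a+3$ for every $k$ in the block, and \eqref{lw} yields $h_n\ge (2a+1)^2+4a+3$. Combining the two directions gives the asserted equality $h_n=(2a+1)^2+4a+3$.

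If one preferred to avoid invoking the top-endpoint estimate, the upper bound can also be obtained directly: on $12a+13\le k\le 16a+18$ one has $\sqrt{(6a+5)^2+3k}=6a+8+\alpha$ with $0\le \alpha<1$ (because $(6a+5)^2+3(12a+13)=(6a+8)^2$), so \eqref{bup} reads $h_n\le (2a+1)^2+\lfloor (3k-12-4\alpha)/9\rfloor$. The delicate point, and really the only one in the argument, is then to control $\alpha$ finely enough to confirm that this floor equals $4a+3$ for each of the four values of $k$ rather than slipping to $4a+4$ near the top of the block; the monotonicity shortcut above is attractive precisely because it sidesteps this per-value floor estimate by anchoring the bound at the single already-computed value $k=12a+16$.
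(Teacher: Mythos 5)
Your proof is correct and follows essentially the same route as the paper: the paper likewise evaluates the bound \eqref{bup} only at the top endpoint $k=12a+16$ (where $\sqrt{(6a+5)^2+3k}=6a+8+\alpha$ with $0<\alpha<1$), and then combines the monotonicity of $h_n$ with the lower bound $f_n=(2a+1)^2+4a+3$ supplied by Construction \ref{qn}. Your explicit check that the two relevant cases of the construction both give $(2a+1)^2+4a+3$ across the block (including the value at $k=12a+13$ coming from the case $6a+13\leq k\leq 12a+13$) simply spells out what the paper leaves implicit.
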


In the same manner, we can establish the following.

\begin{proposition} For $12a+18\leq k\leq 12a+19$ $h_n=(2a+1)^2+4a+3$
    
\end{proposition}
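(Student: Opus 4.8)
The plan is to prove the statement by squeezing $h_n$ between the upper bound \eqref{bup} and the lower bound supplied by Construction \ref{qn}, exactly as in the preceding propositions. For $n=12a^2+20a+8+k$ with $12a+18\leq k\leq 12a+19$, the first task is to evaluate $\sqrt{(6a+5)^2+3k}$ carefully. Since in the adjacent range $12a+13\leq k\leq 16a+18$ the paper has already recorded that this square root equals $6a+8+\alpha$ with $0\leq\alpha<1$, I would first verify that our $k$ lies in that band (it does, as $12a+18\leq 16a+18$ for $a\geq0$), so $(6a+5)^2+3k=(6a+8)^2+(3k-36a-39)$ and $3k-36a-39\in\{15,18\}$, giving a genuinely fractional $\alpha$ bounded away from the integer jumps. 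The key computation is then to plug $k=12a+18$ and $k=12a+19$ into the floor expression $\left\lfloor\frac{24a+20+3k-4\sqrt{(6a+5)^2+3k}}{9}\right\rfloor$ and show it equals $4a+3$ in both cases.

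Concretely, I would bound $\alpha$ tightly enough to pin down the floor. Writing the upper bound as $(2a+1)^2+\left\lfloor\frac{3k-12-4\alpha}{3}\right\rfloor$ as the paper does for this range (after absorbing the $24a+20$ and the $6a+8$ part of the root into the $(2a+1)^2$ term), I substitute $3k-12=36a+42$ or $36a+45$ and use $0\leq\alpha<1$ to conclude that the floor cannot exceed $4a+3$. The cleanest route is to observe, as in the earlier $k=12a+16$ case, that the upper bound at these $k$ values is at most $(2a+1)^2+4a+3$, i.e. $h_n\leq 4a^2+8a+4$. I would carry out the arithmetic to confirm that the floor does not reach $4a+4$ for either $k$.

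For the matching lower bound I appeal directly to Construction \ref{qn}. The relevant item there is the clause $12a+18\leq k\leq 12a+20$, which builds $A_n$ as the $(4a+5)\times(4a+5)$ square minus $(2a+2)^2$ squares with a five-cell bridge enclosing one extra hole, yielding $f_n=(2a+1)^2+4a+4=4a^2+8a+4$. Since $h_n\geq f_n$ by \eqref{lw}, combining this with the upper bound $h_n\leq 4a^2+8a+4$ forces equality. As a cross-check I would invoke the monotonicity of $h_n$ together with the already-established value at $k=12a+16$ (Proposition for $12a+13\leq k\leq 12a+16$, which gives the same value $(2a+1)^2+4a+3$), so that the sandwich is consistent across the small gap at $k=12a+17$.

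The main obstacle I anticipate is the precise control of $\alpha$: I must ensure that $4\alpha$ does not push the numerator across a multiple of $9$ (equivalently, that the floor in the $\frac{\cdots}{3}$ formulation is not inflated by the fractional root). This requires showing $\alpha$ is bounded away from both $0$ and $1$ by enough margin for the two specific residues $3k-36a-39\in\{15,18\}$; a short estimate comparing $(6a+8)^2$ and $(6a+9)^2$ against $(6a+5)^2+3k$ settles it, and I would present that estimate explicitly rather than leaving it to the reader, since it is the only genuinely delicate point in the argument.
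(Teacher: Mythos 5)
Your strategy (squeeze $h_n$ between the refined upper bound \eqref{bup} and the lower bound from Construction \ref{qn}) is exactly the paper's intended ``in the same manner'' argument, but both of your key computations fail, and the two errors compensate each other to create the illusion that the sandwich closes. First, the upper bound: for $k=12a+18$ the numerator is $3k-12-4\alpha=36a+42-4\alpha$ with $0<\alpha<1$, so
\[
\frac{36a+42-4\alpha}{9}>4a+4+\tfrac{2}{9},
\]
and the floor is $4a+4$, not $4a+3$; forcing the value $4a+3$ would require $\alpha>\tfrac{3}{2}$, which is impossible. The same happens for $k=12a+19$, where the numerator is $36a+45-4\alpha$ and the floor is again $4a+4$. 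No refinement of your $\alpha$-estimate can change this. Second, the lower bound: the construction item you quote gives $f_n=(2a+1)^2+4a+4$, and this equals $4a^2+8a+5$, not $4a^2+8a+4$ as you wrote; the value in the statement is $(2a+1)^2+4a+3=4a^2+8a+4$, which is \emph{strictly smaller} than the construction's $f_n$. So your claimed sandwich $4a^2+8a+4\le h_n\le 4a^2+8a+4$ rests on one arithmetic slip on each side.

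Carried out correctly, your own method yields $(2a+1)^2+4a+4\le f_n\le h_n\le (2a+1)^2+4a+4$, i.e.\ $h_n=(2a+1)^2+4a+4$, which \emph{contradicts} the proposition as printed. A concrete check: for $a=0$, $k=18$, $n=26$, the $5\times 5$ square minus $4$ holes ($21$ cells) plus a five-cell bridge on the left side is a $26$-omino with $5$ deep holes, while \eqref{bup} gives $h_{26}\le\left\lfloor \tfrac{26}{3}-\tfrac{4}{9}\sqrt{79}+\tfrac{5}{9}\right\rfloor=5$; hence $h_{26}=5$, not $4$. The statement is therefore inconsistent with Construction \ref{qn} together with \eqref{lw}, and also with the paper's final summary theorem, which asserts $h_n=\left\lfloor \tfrac{n}{3}-\tfrac{4}{9}\sqrt{3n+1}+\tfrac{5}{9}\right\rfloor=(2a+1)^2+4a+4$ on this very range; presumably ``$4a+3$'' is a typo for ``$4a+4$''. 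Had you executed your computations correctly, you would have discovered this rather than ``proved'' the printed claim. One further small point: for $a=0$ the value $k=12a+19=19$ lies outside the band $12a+13\le k\le 16a+18$ (indeed $\sqrt{25+57}=\sqrt{82}>9$), so your band-membership check fails in that edge case, although the resulting floor is still $4a+4$ there.
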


By comparing $\left\lfloor \frac{3k-19}{9}\right\rfloor$ and $\left\lfloor \frac{3k-12-4\alpha}{9}\right\rfloor$, we obtain the following result.

\begin{theorem} For $12a+21\leq k\leq 16 a+8$ and $k\not \equiv 2\pmod 3$ $$h_n=(2a+1)^2+4 +\left\lfloor \frac{k-18}{3}\right\rfloor$$
    
\end{theorem}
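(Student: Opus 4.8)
The plan is to sandwich $h_n$ between the lower bound coming from Construction \ref{qn} and the refined upper bound \eqref{bup}, and to show that for $k\not\equiv 2\pmod 3$ the two bounds coincide. First I would record the lower bound: for $12a+21\le k\le 18a+21$ Construction \ref{qn} produces an $n$-omino with exactly $f_n=(2a+1)^2+4+\left\lfloor\frac{k-18}{3}\right\rfloor$ deep holes, so by \eqref{lw},
$$ h_n \ge (2a+1)^2 + 4 + \left\lfloor \frac{k-18}{3} \right\rfloor. $$
Since the range $12a+21\le k\le 16a+8$ is contained in the construction's range, this bound is available.

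Next I would treat the upper bound. Writing $3n+1=(6a+5)^2+3k$ and invoking the estimate established just before the statement, namely $\sqrt{(6a+5)^2+3k}=6a+8+\alpha$ with $0\le\alpha<1$ (valid on $12a+13\le k\le 16a+18$, hence on our range), the floor form of \eqref{bup} becomes
$$ h_n \le (2a+1)^2 + \left\lfloor \frac{3k-12-4\alpha}{9} \right\rfloor. $$
I would also note that $\alpha>0$ strictly throughout, since $\alpha=0$ would force $(6a+5)^2+3k$ to equal the perfect square $(6a+8)^2$, i.e. $k=12a+13$, which is excluded once $k\ge 12a+21$.

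The crux is then the arithmetic identity
$$ \left\lfloor \frac{3k-12-4\alpha}{9} \right\rfloor = 4 + \left\lfloor \frac{k-18}{3} \right\rfloor = \left\lfloor \frac{3k-18}{9} \right\rfloor, $$
which pins $h_n$ between two equal quantities. Since $0<\alpha<1$ we have $3k-18<3k-12-4\alpha$, so the two floors differ only if some multiple of $9$ lies in the window $(3k-18,\,3k-12-4\alpha]$, an interval of length $6-4\alpha<6$ containing at most one such multiple. A short case check on $k\bmod 3$ locates the only multiple of $3$ in this window that can be a multiple of $9$: for $k\equiv 0$ it is the excluded left endpoint $3k-18$ (the next candidate $3k-9$ is already past the right endpoint), and for $k\equiv 1$ it is $3k-12$, which lies strictly beyond the right endpoint precisely because $\alpha>0$. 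In either case no multiple of $9$ is caught, so the floors agree. I would close by observing that the excluded residue $k\equiv 2\pmod 3$ is exactly the case where the relevant multiple of $9$ is $3k-15$, which does fall inside the window whenever $\alpha\le\tfrac34$; thus the hypothesis $k\not\equiv 2$ is precisely what forces the bounds to meet.

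The main obstacle I anticipate is this last floor computation: keeping the direction of every inequality straight while tracking how $\alpha\in(0,1)$ interacts with the position of the multiples of $9$, and in particular isolating the role of the congruence condition on $k$. Once the identity above is in hand, combining it with the lower and upper bounds yields $h_n=(2a+1)^2+4+\left\lfloor\frac{k-18}{3}\right\rfloor$ at once.
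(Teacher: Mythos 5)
Your proposal is correct and takes essentially the same route as the paper: sandwiching $h_n$ between the Construction \ref{qn} lower bound $(2a+1)^2+4+\left\lfloor\frac{k-18}{3}\right\rfloor$ and the floor form of \eqref{bup} written via $\sqrt{(6a+5)^2+3k}=6a+8+\alpha$, then checking when a multiple of $9$ can separate the two floors according to $k\bmod 3$. Your write-up is in fact more careful than the paper's one-line comparison (which, incidentally, misprints the lower-bound floor as $\left\lfloor\frac{3k-19}{9}\right\rfloor$ instead of $\left\lfloor\frac{3k-18}{9}\right\rfloor$ and the upper bound's denominator as $3$ instead of $9$), since you justify $\alpha>0$ and explain exactly why $k\equiv 2\pmod 3$ must be excluded.
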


For $16 a+19\leq k\leq 20a +24$ the value of $\sqrt{(6a+5)^2+3k}=6a+9+\alpha$ where $0<\alpha<1$. It holds that $$h_n\leq (2a+1)^2+\left\lfloor \frac{3k-12-4\alpha}{3}\right\rfloor$$

Then 
\begin{theorem} For $16a+19\leq k\leq 18 a+21$ and  $$h_n=(2a+1)^2+4 +\left\lfloor \frac{k-18}{3}\right\rfloor$$
    
\end{theorem}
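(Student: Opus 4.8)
The plan is to sandwich $h_n$ between the lower bound from Construction~\ref{qn} and the refined upper bound \eqref{bup}, and to show the two agree on the whole interval $16a+19\le k\le 18a+21$. For the lower bound, the construction item covering $12a+21\le k\le 18a+21$ builds an $n$-omino with exactly $f_n=(2a+1)^2+4+\lfloor (k-18)/3\rfloor$ deep holes, so \eqref{lw} gives $h_n\ge (2a+1)^2+4+\lfloor (k-18)/3\rfloor$; for the finitely many small-$a$ values where $16a+19$ dips below $12a+21$ (only $a=0$), the adjacent construction items deliver the same count, so the lower bound holds throughout the range.

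For the upper bound I would start from the $k$-form of \eqref{bup} recorded earlier, namely $h_n\le (2a+1)^2+\lfloor (24a+20+3k-4\sqrt{(6a+5)^2+3k})/9\rfloor$, and substitute $\sqrt{(6a+5)^2+3k}=6a+9+\alpha$, which holds with $0<\alpha<1$ on the wider range $16a+19\le k\le 20a+24$. This collapses the bound to $h_n\le (2a+1)^2+\lfloor (3k-16-4\alpha)/9\rfloor$.

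The decisive step is to sharpen the estimate on $\alpha$. I would check that $\alpha<\tfrac12$ is equivalent to $(6a+5)^2+3k<(6a+\tfrac{19}{2})^2$, i.e.\ to $3k<54a+65\tfrac14$, i.e.\ to $k\le 18a+21$; since the left side is an integer while $(6a+\tfrac{19}{2})^2$ is not, the inequality is strict and never an equality. Thus the upper endpoint $k=18a+21$ is exactly the threshold that forces $0<\alpha<\tfrac12$, hence $0<2-4\alpha<2$. Rewriting the target as $4+\lfloor (k-18)/3\rfloor=\lfloor (3k-18)/9\rfloor$, the remaining task is $\lfloor (3k-16-4\alpha)/9\rfloor=\lfloor (3k-18)/9\rfloor$; since the two numerators differ by $2-4\alpha\in(0,2)$, this amounts to showing that adding $2-4\alpha$ to $3k-18$ never crosses a multiple of $9$, which follows from a three-line case check according to $(3k-18)\bmod 9\in\{0,3,6\}$. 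Combining the matching bounds yields the claimed value.

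The main obstacle is the residue $k\equiv 0\pmod 3$, where $3k-18$ is itself a multiple of $9$ and the upper bound is attained exactly: there the floor equality fails the instant $\alpha\ge\tfrac12$, so the whole argument hinges on the range stopping at $k=18a+21$ rather than continuing to $20a+24$. Establishing the sharp cutoff $\alpha<\tfrac12\iff k\le 18a+21$ is therefore the crux; the other two residue classes carry slack of at least $2-4\alpha$ away from the relevant multiple of $9$ and present no difficulty.
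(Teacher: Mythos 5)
Your proof is correct and follows essentially the same route as the paper: the lower bound from Construction~\ref{qn} matched against the $k$-form of \eqref{bup} under the substitution $\sqrt{(6a+5)^2+3k}=6a+9+\alpha$ with $0<\alpha<1$, followed by a floor/residue comparison. The paper leaves that comparison implicit for this theorem (its displayed intermediate bound even carries a typo, $\left\lfloor \frac{3k-12-4\alpha}{3}\right\rfloor$ where it should read $\left\lfloor \frac{3k-16-4\alpha}{9}\right\rfloor$), and your write-up supplies exactly the omitted details, including the $a=0$ edge case for the lower bound and the sharp threshold $\alpha<\tfrac{1}{2}\iff k\le 18a+21$.
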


\begin{proposition} For $18a+22 \leq k\leq 18 a+23$ $h_n=(2a+1)^2+6a+5$.
    
\end{proposition}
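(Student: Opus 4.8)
The plan is to sandwich $h_n$ between the lower bound supplied by Construction~\ref{qn} and the upper bound \eqref{bupe}, showing that both sides evaluate to $(2a+1)^2+6a+5$ for $k\in\{18a+22,\,18a+23\}$.

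For the lower bound I would invoke the matching case of Construction~\ref{qn}: for $18a+22\le k\le 18a+25$ the polyomino $A_n$ is the $(4a+7)\times(4a+5)$ rectangle carrying a $(2a+3)\times(2a+2)$ array of unit deep holes, so $f_n=(2a+3)(2a+2)=(2a+1)^2+6a+5$, and hence $h_n\ge f_n=(2a+1)^2+6a+5$ by \eqref{lw}. Equivalently, the preceding theorem already gives $h_n=(2a+1)^2+6a+5$ at $k=18a+21$, and the non-decreasing monotonicity of $h_n$ propagates this to $k=18a+22,18a+23$.

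For the matching upper bound I would start from the rewritten form of \eqref{bupe},
\[
h_n\le (2a+1)^2+\left\lfloor\frac{24a+20+3k-4\sqrt{(6a+5)^2+3k}}{9}\right\rfloor,
\]
and use the estimate recorded just before the statement: for $16a+19\le k\le 20a+24$ one has $\sqrt{(6a+5)^2+3k}=6a+9+\alpha$ with $0<\alpha<1$. Since $18a+22$ and $18a+23$ both lie in this range, substitution yields $h_n\le (2a+1)^2+\lfloor(3k-16-4\alpha)/9\rfloor$. For $k=18a+22$ the argument of the floor equals $6a+\frac{50-4\alpha}{9}$, and for $k=18a+23$ it equals $6a+\frac{53-4\alpha}{9}$; in both cases $0<\alpha<1$ forces the fractional term strictly into the interval $(5,6)$, so the floor is exactly $6a+5$, giving $h_n\le (2a+1)^2+6a+5$.

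Combining the two bounds yields $h_n=(2a+1)^2+6a+5$, as claimed. The only delicate point is the floor evaluation in the last step: the $-4\alpha$ correction must be controlled tightly enough that $\frac{3k-16-4\alpha}{9}$ never reaches $6a+6$ nor drops to $6a+4$. This is precisely where the range restriction $16a+19\le k\le 20a+24$, which pins down $\alpha\in(0,1)$, is essential; outside it the integer part of the square root shifts and the floor would change.
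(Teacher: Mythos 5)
Your proof is correct and follows essentially the same route as the paper: the lower bound comes from Construction~\ref{qn} (equivalently, monotonicity from $k=18a+21$), and the upper bound from the rewritten form of \eqref{bup} with $\sqrt{(6a+5)^2+3k}=6a+9+\alpha$, $0<\alpha<1$, whose floor evaluates to $6a+5$ for both values of $k$. Note that your substitution yields the correct numerator $3k-16-4\alpha$ over $9$, quietly fixing a typo in the paper's displayed bound for this range (which reads $\frac{3k-12-4\alpha}{3}$), so your computation is the one the authors intended.
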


\begin{proposition} For $k=18 a+26$ $h_n=(2a+1)^2+6a+6$.
    
\end{proposition}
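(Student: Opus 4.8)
The plan is to sandwich $h_n$ between matching upper and lower bounds, exactly as in the preceding propositions. Throughout set $k=18a+26$, so that $n=12a^2+38a+34$, and recall from the discussion surrounding \eqref{bup} that
\[
h_n \leq (2a+1)^2+\left\lfloor \frac{24a+20+3k-4\sqrt{(6a+5)^2+3k}}{9}\right\rfloor .
\]

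First I would evaluate the radicand at $k=18a+26$, obtaining $(6a+5)^2+3k=36a^2+114a+103$. For $a\geq 1$ this lies strictly between $(6a+9)^2=36a^2+108a+81$ and $(6a+10)^2=36a^2+120a+100$, since the two gaps are $6a+22>0$ and $6a-3>0$; hence $\sqrt{(6a+5)^2+3k}=6a+9+\alpha$ with $0<\alpha<1$, as already recorded for the range $16a+19\leq k\leq 20a+24$ that contains $k=18a+26$. Substituting gives the numerator $24a+20+3k-4(6a+9+\alpha)=54a+62-4\alpha=9(6a+6)+(8-4\alpha)$, and since $8-4\alpha\in(4,8)\subset(0,9)$ the floor equals $6a+6$. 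Thus $h_n\leq (2a+1)^2+6a+6$.

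For the reverse inequality I would invoke Construction \ref{qn}: in the case $18a+26\leq k\leq 18a+28$ the polyomino $A_n$ attains $f_n=(2a+1)^2+6a+6$ deep holes, and by \eqref{lw} we have $h_n\geq f_n=(2a+1)^2+6a+6$. Combining the two bounds yields the claimed equality $h_n=(2a+1)^2+6a+6$.

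The one delicate point is the boundary case $a=0$ (that is, $n=34$), which is not covered by the interval $16a+19\leq k\leq 20a+24$: here $(6a+5)^2+3k=103$ falls in $(10^2,11^2)$ rather than in $((6a+9)^2,(6a+10)^2)$, so $\sqrt{103}=10+\alpha$ with $\alpha\approx 0.149$. I would dispatch this separately by a direct evaluation, checking that the floor $\left\lfloor(98-4\sqrt{103})/9\right\rfloor$ still equals $6$; together with $f_{34}=7$ from Construction \ref{qn} this gives $h_{34}=7=(2\cdot 0+1)^2+6$, consistent with the formula. This edge case, together with the verification that $\alpha$ stays strictly inside $(0,1)$ so that the floor is pinned down exactly, are the only steps requiring care; the remainder is routine arithmetic.
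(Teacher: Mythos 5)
Your proof is correct and follows essentially the same route as the paper: write $\sqrt{(6a+5)^2+3k}=6a+9+\alpha$ with $0<\alpha<1$, evaluate the floor in the refined upper bound \eqref{bup} to get $h_n\leq(2a+1)^2+6a+6$, and match this against the lower bound $f_n=(2a+1)^2+6a+6$ supplied by Construction \ref{qn}. Your separate check of $a=0$ (where $k=26$ lies outside the range $16a+19\leq k\leq 20a+24$ and the radicand $103$ exceeds $(6a+10)^2=100$, so the $\alpha$-decomposition used for $a\geq 1$ does not apply) is a genuine point of care that the paper itself glosses over.
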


Similarly, we can show that. 

\begin{theorem}
    For $18a+29\leq k \leq 20a+24$ and $k\not \equiv1\pmod 3$ $$h_n=(2a+1)^2+6+\left\lfloor \frac{k-26}{3}\right\rfloor$$
\end{theorem}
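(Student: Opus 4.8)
The plan is to follow the exact same template that the earlier propositions and theorems in this section have used, combining the refined upper bound from Corollary~\ref{bupe} with the explicit lower bound coming from Construction~\ref{qn} and the monotonicity of $h_n$. Concretely, for $n=12a^2+20a+8+k$ with $18a+29\le k\le 20a+24$, I would first pin down the integer part of $\sqrt{(6a+5)^2+3k}$. Since $18a+29\le k\le 20a+24$ places $3k$ in a range where $(6a+5)^2+3k$ sits between consecutive squares, I expect to establish that $\sqrt{(6a+5)^2+3k}=6a+9+\alpha$ with $0<\alpha<1$, exactly as the immediately preceding paragraph asserts for the wider interval $16a+19\le k\le 20a+24$. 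This lets me rewrite the upper bound as $h_n\le (2a+1)^2+\left\lfloor\frac{3k-12-4\alpha}{9}\right\rfloor$.

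The next step is the floor-comparison that is the recurring engine of every result in this section: I would show that, under the congruence restriction $k\not\equiv 1\pmod 3$, the quantity $\left\lfloor\frac{3k-12-4\alpha}{9}\right\rfloor$ equals $\left\lfloor\frac{3k-26}{9}\right\rfloor=6+\left\lfloor\frac{k-26}{3}\right\rfloor-(2a+1)^2$—more precisely, that the upper bound collapses to $(2a+1)^2+6+\left\lfloor\frac{k-26}{3}\right\rfloor$. The role of the congruence condition is to guarantee that the correction term $-4\alpha$ (with $0<\alpha<1$) cannot push the value across an integer boundary of the floor function; the excluded residue $k\equiv 1\pmod 3$ is precisely the case where $3k-12$ lands on a multiple of $9$ and the small perturbation $-4\alpha$ does change the floor. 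I would verify this by a short case analysis on $k\bmod 3$, bounding $4\alpha\in(0,4)$ and checking that $3k-12$ and $3k-12-4\alpha$ share the same floor after division by $9$ in the two admissible residue classes.

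Finally, I would invoke Construction~\ref{qn} in the regime $18a+29\le k\le 24a+31$, where $f_n=(2a+1)^2+6+\left\lfloor\frac{k-26}{3}\right\rfloor$, together with the universal bound $h_n\ge f_n$ from \eqref{lw}. Since the lower bound $f_n$ and the upper bound just derived coincide on the stated range under $k\not\equiv 1\pmod 3$, the equality $h_n=(2a+1)^2+6+\left\lfloor\frac{k-26}{3}\right\rfloor$ follows by the sandwich. Where the congruence condition removes a value of $k$, I expect it can be recovered by monotonicity of $h_n$ from a neighbouring admissible $k$, just as the earlier theorems upgraded the restricted propositions; however, since the statement here is phrased only for $k\not\equiv 1\pmod 3$, the sandwich alone suffices.

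The main obstacle is the floor-comparison in the second step: everything hinges on controlling $\alpha$ tightly enough to know that $-4\alpha$ does not cross an integer multiple of $9$, and getting the boundary congruence class exactly right. The arithmetic is routine but error-prone, and one must be careful that the estimate $0<\alpha<1$ genuinely holds across the entire interval $18a+29\le k\le 20a+24$ for all $a\ge 0$, including verifying the endpoints, rather than merely on its interior.
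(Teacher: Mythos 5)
Your overall strategy---sandwiching the construction's lower bound $f_n=(2a+1)^2+6+\left\lfloor \frac{k-26}{3}\right\rfloor$ against the refined Pick upper bound and settling a floor comparison by locating $\sqrt{(6a+5)^2+3k}$ between consecutive integers---is exactly the paper's intended argument (the paper only says ``Similarly, we can show that''). But the execution as written has two genuine gaps. First, your upper bound carries the wrong numerator. Substituting $\sqrt{(6a+5)^2+3k}=6a+9+\alpha$ into the general bound gives
\begin{equation*}
h_n\le (2a+1)^2+\left\lfloor \frac{24a+20+3k-4(6a+9+\alpha)}{9}\right\rfloor=(2a+1)^2+\left\lfloor \frac{3k-16-4\alpha}{9}\right\rfloor ,
\end{equation*}
not $(2a+1)^2+\left\lfloor \frac{3k-12-4\alpha}{9}\right\rfloor$. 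You inherited the paper's own copy-paste error: its displayed bound for $16a+19\le k\le 20a+24$ repeats verbatim the formula from the case $\sqrt{(6a+5)^2+3k}=6a+8+\alpha$ (numerator $3k-12-4\alpha$, denominator $3$), and you corrected only the denominator. This is not cosmetic: writing $k=3m$, your version gives $\left\lfloor \frac{9m-12-4\alpha}{9}\right\rfloor=m-2$ for every $\alpha\in(0,1)$, while the target value is $6+\left\lfloor \frac{k-26}{3}\right\rfloor=m-3$, so the floor identity you plan to ``verify by a short case analysis'' is false and the sandwich never closes. Relatedly, your explanation of the excluded residue (that $3k-12$ hits a multiple of $9$ when $k\equiv 1$) is an artifact of that wrong numerator; with the correct one, $k\equiv 1\pmod 3$ is excluded simply because the upper bound lands one above the construction's value, so equality cannot be concluded there. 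Also note that $\left\lfloor \frac{3k-26}{9}\right\rfloor\neq 6+\left\lfloor \frac{k-26}{3}\right\rfloor$ when $k\equiv 2\pmod 3$; the correct rewriting of the target is $\left\lfloor \frac{3k-24}{9}\right\rfloor$.

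Second, even after fixing the numerator, the estimate $0<\alpha<1$ you rely on is too weak. For $k=3m$ one gets $\left\lfloor \frac{9m-16-4\alpha}{9}\right\rfloor=m-2$ whenever $\alpha\le \frac12$, and $=m-3$ (the desired value) only when $\alpha>\frac12$. So the case $k\equiv 0\pmod 3$ requires proving $\alpha>\frac12$, i.e.\ $(6a+5)^2+3k>\left(6a+9+\frac12\right)^2$, which amounts to $3k>54a+65.25$ and does hold on your range since $k\ge 18a+29\ge 18a+22$. This is precisely the ``it is impossible that $\alpha\le\frac12$'' step that the paper spells out for the analogous theorem covering $6a+13\le k\le 8a+7$, and it must appear here as well; for $k\equiv 2\pmod 3$ the crude bound $0<\alpha<1$ does suffice. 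With these two repairs (and noting the range is nonempty only for $a\ge 3$), your sandwich argument closes and coincides with the paper's proof.
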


\begin{theorem}
    For $20a+25\leq k \leq 24a+31$ $$h_n=(2a+1)^2+6+\left\lfloor \frac{k-26}{3}\right\rfloor$$
\end{theorem}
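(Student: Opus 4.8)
The plan is to sandwich $h_n$ between the lower bound of Construction \ref{qn} and the refined upper bound \eqref{bup}, as in the preceding cases. The last case of Construction \ref{qn} already furnishes, for the whole range $20a+25\le k\le 24a+31$, the value $f_n=(2a+1)^2+6+\lfloor (k-26)/3\rfloor$, so it remains to prove the matching upper bound. First I would locate the integer part of the square root in \eqref{bup}: a direct check at the endpoints shows $(6a+5)^2+3k=(6a+10)^2$ at $k=20a+25$ and $(6a+5)^2+3k=(6a+11)^2-3$ at $k=24a+31$, whence $6a+10\le\sqrt{(6a+5)^2+3k}<6a+11$ throughout. Writing $\sqrt{(6a+5)^2+3k}=6a+10+\alpha$ with $0\le\alpha<1$ and substituting into \eqref{bup} collapses the upper bound to \[ h_n\le (2a+1)^2+\left\lfloor\frac{3k-20-4\alpha}{9}\right\rfloor. \]

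Next I would compare this floor with the target $6+\lfloor (k-26)/3\rfloor=\lfloor (3k-24)/9\rfloor$. The two numerators $3k-20-4\alpha$ and $3k-24$ differ by $4(1-\alpha)\in(0,4]$, so their quotients by $9$ have the same floor unless a multiple of $9$ lies in $(3k-24,\,3k-20-4\alpha]$. Splitting on $k\bmod 3$, for $k\equiv 0$ and $k\equiv 2\pmod 3$ no such multiple occurs, the upper bound equals $(2a+1)^2+6+\lfloor (k-26)/3\rfloor$, and it meets $f_n$; these residue classes are therefore settled.

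The main obstacle is $k\equiv 1\pmod 3$. Here the candidate multiple $3k-21$ lies in the interval exactly when $\alpha\le\tfrac14$, i.e.\ for the lower part of the range (roughly $k\le 21a+26$), and for these $k$ the bound \eqref{bup} only yields $h_n\le (2a+1)^2+6+\lfloor (k-26)/3\rfloor+1$. Plain monotonicity of $h_n$ cannot remove this surplus, because the nearest exact values (at $k-1$ and $k+1$) straddle the target. To close the gap I would revert to the sharper inequality \eqref{imp} and retain the ceiling $s_o\ge 2\lceil 2\sqrt{n+h_n}\rceil$ that was relaxed in deriving \eqref{bup}: were the larger value attained, the whole chain leading to \eqref{imp} would have to be essentially tight, forcing $\bar P$ to be a minimal-perimeter (hence almost rectangular) polyomino all of whose holes are unit squares with pairwise disjoint boundaries and with no interior lattice points. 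A separate packing estimate then finishes the argument, since deep unit-square holes form an independent set in the king graph of the interior cells and so cannot reach the larger count.

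I expect this packing count to be the delicate step: it must be uniform in $a$ rather than verified instance by instance, and it must accommodate the minimal-perimeter shapes that are not exact rectangles. Once the class $k\equiv 1\pmod 3$ is disposed of in this way, combining it with the two classes handled by the floor comparison and with the lower bound of Construction \ref{qn} gives $h_n=(2a+1)^2+6+\lfloor (k-26)/3\rfloor$ for every admissible $k$.
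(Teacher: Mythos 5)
The parts of your argument that you actually carry out are correct and coincide with the paper's (implicit) method: the lower bound is the last case of Construction \ref{qn}, the square root in \eqref{bup} satisfies $6a+10\le\sqrt{(6a+5)^2+3k}<6a+11$ on the whole range, the bound collapses to $h_n\le(2a+1)^2+\left\lfloor(3k-20-4\alpha)/9\right\rfloor$, and the floor comparison with $6+\left\lfloor(k-26)/3\right\rfloor=\left\lfloor(3k-24)/9\right\rfloor$ settles $k\equiv0,2\pmod 3$, as well as $k\equiv1\pmod 3$ with $k\ge 21a+27$, where $\alpha>1/4$. You are also right that monotonicity cannot repair the remaining case, since the exact values at $k-1$ and $k+1$ differ by one.

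The gap is the case $k\equiv1\pmod 3$, $20a+25\le k\le 21a+26$: there your proposal is a plan, not a proof. The tightness step could be made rigorous (if $h_n$ exceeded the claimed value by one, then $\lceil 2\sqrt{n+h_n}\rceil=8a+13$ and the chain $4n\ge 2s_o+s_i+8h_n-4\ge 4\lceil 2\sqrt{n+h_n}\rceil+12h_n-4$ is forced into equality throughout, so all holes are unit squares and $\bar P$ has minimal perimeter; note that your clause about ``no interior lattice points'' belongs to the Pick-based bound of Section 4, not to \eqref{imp}). But the packing lemma --- that the interior of every minimal-perimeter polyomino of the relevant area has king-graph independence number strictly below the larger hole count --- is exactly the whole content of the hard case, and you leave it unproven, as you yourself admit. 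To be fair, this gap is not only yours but the paper's: the same failure of the floor comparison is precisely why the preceding theorem carries the restriction $k\not\equiv1\pmod 3$, yet the present theorem is stated with no restriction and no argument. The smallest instance is $a=0$, $k=25$, i.e.\ $n=33$: Construction \ref{qn} gives $6$ while \eqref{bupe} gives $\lfloor 64/9\rfloor=7$, so the paper's tools alone cannot decide $h_{33}$. Your king-graph idea does finish this instance: equality would force $\bar P$ to have $40$ cells and perimeter $26$, hence bounding box $5\times8$ or $6\times7$, whose interior lies in a $3\times 6$ or $4\times 5$ grid and so contains at most $6<7$ pairwise non-king-adjacent cells. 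However, the theorem requires this estimate uniformly in $a$, over all bounding boxes $w\times l$ with $w+l\le 8a+13$ and $wl$ at least the forced area, and that is the step missing from your proposal.
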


We now summarize the results of this section.

\begin{theorem} Let $n$ be a positive inter and let $a$ and $k$ be the unique non-negative integers such that $n=(12 a^2+20a+8)+k$ and $0\leq k\leq 24a+31$. If $n\leq 7$  then $$h_n=0.$$ If $k$ satisfies one of the following conditions 

\begin{itemize}
    \item $0\leq k\leq 3$
    \item $5\leq k\leq 6$
    \item $8\leq k\leq 4a+3$ and $k \not\equiv 1 \pmod 3$
    \item $4a+4\leq k\leq 6a+7$
    \item $k=6a+10$ 
    \item $6a+13\leq k \leq 9a+8$ and $k\not\equiv 0 \pmod 3$ 
    \item $9a+10\leq k\leq 12a+16$
    \item $12a+18\leq k\leq 12a+19$
    \item $12a+21\leq k\leq 16 a+8$ and $k\not \equiv 2\pmod 3$ 
    \item $16a+19\leq k\leq 18 a+23$
    \item $k=18 a+26$
    \item $18a+29\leq k \leq 20a+24$ and $k\not \equiv1\pmod 3$
    \item $20a+25\leq k \leq 24a+31$
\end{itemize}

then $$h_n=\left \lfloor \frac{n}{3}-\frac{4}{9} \sqrt{3n+1}+\frac{5}{9}\right\rfloor$$
    
\end{theorem}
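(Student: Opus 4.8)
The plan is to assemble the statement from the exact-value results established throughout this section, whose common engine is the upper bound \eqref{bupe} together with the lower bound $h_n\geq f_n$ furnished by Construction~\ref{qn} through \eqref{lw}. Thus in every case the chain $f_n\leq h_n\leq\bigl\lfloor \tfrac{n}{3}-\tfrac{4}{9}\sqrt{3n+1}+\tfrac{5}{9}\bigr\rfloor$ is available, and it suffices to verify $f_n=\bigl\lfloor \tfrac{n}{3}-\tfrac{4}{9}\sqrt{3n+1}+\tfrac{5}{9}\bigr\rfloor$ on each listed range in order to force equality throughout.

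The first step is the algebraic normalization of the target floor. Writing $n=12a^2+20a+8+k$ one has $3n+1=(6a+5)^2+3k$, and since $(2a+1)^2$ is an integer it may be extracted from the floor, yielding
\begin{align*}
\left\lfloor \frac{n}{3}-\frac{4}{9}\sqrt{3n+1}+\frac{5}{9}\right\rfloor
= (2a+1)^2+\left\lfloor \frac{24a+20+3k-4\sqrt{(6a+5)^2+3k}}{9}\right\rfloor .
\end{align*}
The remaining problem is therefore to evaluate the second floor on each of the thirteen ranges of $k$.

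I would organize these ranges by the integer band containing $\sqrt{(6a+5)^2+3k}$. Comparing $3k$ with the gaps $(6a+m+1)^2-(6a+5)^2$ shows that on the successive intervals $k\in[0,4a+3]$, $[4a+4,8a+7]$, $[8a+8,12a+12]$, $[12a+13,16a+18]$, $[16a+19,20a+24]$, $[20a+25,24a+31]$ one may write $\sqrt{(6a+5)^2+3k}=6a+m+\alpha$ with $m\in\{5,6,7,8,9,10\}$ and $0\leq\alpha<1$. Substituting this into the reduced floor collapses the irrational term to an explicit linear expression in $a$ and $k$ corrected by the single fractional quantity $\alpha$, and the resulting value of $\lfloor(3k-c-4\alpha)/9\rfloor$ is then pinned down by the residue of $k$ modulo $3$. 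This is exactly the computation performed in each preceding proposition and theorem, so for every condition in the list the matching earlier result delivers both the construction lower bound and the identical upper bound, and its explicit formula reduces to the common closed form.

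The delicate point, already localized in the earlier lemmas, is the control of $\alpha$ near the band boundaries: when $3k$ is small relative to $(6a+5)^2$ the value $\alpha$ is close to $0$, and whether $\lfloor(3k-4\alpha)/9\rfloor$ coincides with the construction value $f_n$ can fail at precisely one residue class of $k$, which is why several ranges carry an exclusion such as $k\not\equiv 1\pmod 3$, $k\not\equiv 2\pmod 3$, or $k\not\equiv 0\pmod 3$. The remaining work is then bookkeeping: confirming that the endpoint propositions cover the residues removed from adjacent intervals, and that the thirteen admissible ranges listed are exactly those on which the lower and upper bounds meet, so that on their union $h_n$ equals $\bigl\lfloor \tfrac{n}{3}-\tfrac{4}{9}\sqrt{3n+1}+\tfrac{5}{9}\bigr\rfloor$.
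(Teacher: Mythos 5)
Your plan is, in outline, exactly the paper's: the paper proves this theorem by assembling the propositions and theorems of Section 6, each of which combines the lower bound \eqref{lw} coming from Construction \ref{qn} with the upper bound \eqref{bupe}, normalized as $(2a+1)^2+\bigl\lfloor\bigl(24a+20+3k-4\sqrt{(6a+5)^2+3k}\bigr)/9\bigr\rfloor$ and evaluated band by band, writing $\sqrt{(6a+5)^2+3k}=6a+m+\alpha$ with $m\in\{5,\dots,10\}$ (your band endpoints match the paper's), followed by a mod-$3$ case analysis of the floor. The one real methodological difference is how the awkward residue classes are settled: the paper's propositions keep the crude bound $0\le\alpha<1$, accept exclusions such as $k\not\equiv 2\pmod 3$, and then remove them by invoking monotonicity of $h_n$ together with the exact value at an adjacent $n$, whereas you propose to push the direct comparison $f_n=\lfloor\cdot\rfloor$ through everywhere using sharper estimates on $\alpha$ (for instance $\alpha\le\tfrac12$ exactly when $k\le 6a+5$ in the band $m=6$). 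That substitution is legitimate, and on twelve of the thirteen listed ranges the thresholds do work out, so there your plan reproduces the paper's conclusions.

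However, the step your whole plan rests on --- that the lower and upper bounds meet on every listed range --- genuinely fails on the final range, and this is a gap you cannot repair. For $20a+25\le k\le 24a+31$ take $k\equiv 1\pmod 3$, say $k=3m+1$; the upper bound becomes $(2a+1)^2+\lfloor(9m-17-4\alpha)/9\rfloor$, which equals $(2a+1)^2+m-2$ whenever $\alpha\le\tfrac14$, i.e.\ whenever $k\le 21a+26$, while Construction \ref{qn} only supplies $f_n=(2a+1)^2+m-3$. Such $k$ exist for every $a\ge 0$, the bounds differ by one there, and monotonicity does not help either, since $f_{n+1}=(2a+1)^2+m-2$. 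Concretely, at $a=0$, $k=25$, i.e.\ $n=33$, the claimed formula gives $\lfloor 64/9\rfloor=7$ while $f_{33}=6$; in fact, combining Pick's formula with a parity count of interior lattice points shows that no $33$-omino encloses seven deep holes, so $h_{33}=6$ and the stated formula is false at this $n$. To be fair, this defect is inherited from the paper itself: its theorem for the range $20a+25\le k\le 24a+31$ asserts $h_n=(2a+1)^2+6+\lfloor(k-26)/3\rfloor$, which contradicts the summary statement at precisely these values, so the paper's own assembly breaks at the same point where your verification would. Your proposal, carried out honestly, would expose this inconsistency rather than complete the proof of the statement as written.
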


For the case of $k$ not covered by the above theorem we have that $$h_n\in \left \{ \left \lfloor \frac{n}{3}-\frac{4}{9} \sqrt{3n+1}-\frac{4}{9}\right\rfloor, \left \lfloor \frac{n}{3}-\frac{4}{9} \sqrt{3n+1}+\frac{5}{9}\right\rfloor\right\}$$We leave as the open question finding the exact values of $h_n$ in these cases.

\section*{Acknowledgments}

The first author was supported by  Project No. H20240855 of the Ministry of Human Resources and Social Security of the People's Republic of China, and by the Ministry of Science, Innovations and Technological Development of the Republic of Serbia.

\end{document}